      \newcommand{\ds}{\displaystyle}
      \newcommand{\Rn}{\mathbb{R}^{N}}
      \newcommand{\Rl}{\mathbb{R}^{L}}
      \newcommand{\Zn}{\mathbb{Z}^N}
      \newcommand{\Tn}{\mathbb{T}^N}
      \newcommand{\Tl}{\mathbb{T}^L}
      \newcommand{\Z}{\mathbb{Z}}
      \newcommand{\Zl}{\mathbb{Z}^{L}}
      \newcommand{\R}{\mathbb{R}}
      \newcommand{\Q}{\mathbb{Q}}
      \newcommand{\Qn}{\mathbb{Q^{N}}}
      \newcommand{\C}{\mathbb{C}}
      \newcommand{\T}{\mathbb{T}}
      \newcommand{\lb}{\left\lbrace}
      \newcommand{\rb}{\right\rbrace}
      \newcommand{\dsum}{\displaystyle\sum}
      \newcommand{\dprod}{\displaystyle\prod}
      \newcommand{\ra}{\rightarrow}
      \newcommand{\bxi}{\boldsymbol{\xi}}
      \newcommand{\balpha}{\boldsymbol{\alpha}}
      \newcommand{\btheta}{\boldsymbol{\theta}}
      \newcommand{\bomega}{\boldsymbol{\omega}}
      \newcommand{\ba}{\mathbf{a}}
      \newcommand{\bb}{\mathbf{b}}
      \newcommand{\be}{\mathbf{e}}
      \newcommand{\bx}{\mathbf{x}}
      \newcommand{\by}{\mathbf{y}}
      \newcommand{\bt}{\mathbf{t}}
      \newcommand{\bm}{\mathbf{m}}
      \newcommand{\bq}{\mathbf{q}}
      \newcommand{\bw}{\mathbf{w}}
      \newcommand{\bv}{\mathbf{v}}
      \newcommand{\bC}{\mathbf{C}}
      \newcommand{\bA}{\mathbf{A}}
      \newcommand{\bB}{\mathbf{B}}
      \newcommand{\bzero}{\mathbf{0}} 
      \newcommand{\mT}{\mathcal{T}}
      \newcommand{\mM}{\mathrm{M}}
      \newcommand{\ms}{\mathtt{s}}
      \newcommand{\mS}{\mathtt{S}}
      \newcommand{\ep}{\epsilon}
\theoremstyle{remark}
	\newtheorem{remark}{Remark}
	\newtheorem{remarks}{Remarks}
	\newtheorem{example}{Example}
\theoremstyle{plain}
  \newtheorem{theorem}{Theorem}
  \newtheorem{proposition}{Proposition}
  \newtheorem{lemma}{Lemma}
  \newtheorem{corollary}{Corollary}
\title{Uniform Dilations in Higher Dimensions}
\author{Michael Kelly}
\email{mkelly@math.utexas.edu}
\author{Th\'{a}i Ho\`{a}ng L\^{e}}
\email{leth@math.utexas.edu}
\address{The University of Texas at Austin \\ 1 University Station C1200\\ Austin, TX, USA 78712}
\date{\today}
\begin{document}
\begin{abstract}
A theorem of Glasner says that if $X$ is an infinite subset of the torus $\T$, then for any $\epsilon>0$, there exists an integer $n$ such that the dilation $nX=\{nx: x \in \T \}$ is $\ep$-dense (i.e, it intersects any interval of length $2\ep$ in $\T$). Alon and Peres provided a general framework for this problem, and showed quantitatively that one can restrict the dilation to be of the form $f(n)X$ where $f \in \Z[x]$ is not constant. Building upon the work of Alon and Peres, we study this phenomenon in higher dimensions. Let $\bA(x)$ be an $L \times N$ matrix whose entries are in $\Z[x]$, and $X$ be an infinite subset of $\T^N$. Contrarily to the case $N=L=1$, it's not always true that there is an integer $n$ such that $\bA(n)X$ is $\ep$-dense in a translate of a subtorus of $\T^{L}$. We give a necessary and sufficient condition for matrices $\bA$ for which this is true. We also prove an effective version of the result.
\end{abstract}
\maketitle
\pagestyle{myheadings}

						      \section{Introduction}

Let $\T=\R/\Z$. A subset $X \subset \T$ is called  $\ep$-\textit{dense} in $\T$ if it intersects every interval of length $2\ep$ in $\T$. 
A \textit{dilation} of $X$ is a set of the form $nX=\lb nx :x \in X \rb \subset \T$. The following theorem of Glasner \cite{MR531259} is the basis for our investigation.

\newtheorem{glasner}{Theorem}
\renewcommand*{\theglasner}{\Roman{glasner}}			
			\begin{glasner}[Glasner] \label{Glasner}
					Let $X$ be an infinite subset of $\T$ and $\epsilon>0$, then there exists a positive integer $n$ such that the dilation $nX$ is $\epsilon$-dense in $\T$.
			\end{glasner}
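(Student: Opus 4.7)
The plan is to reduce the problem to a finite subset of $X$ and then apply a Fourier-averaging argument, in the spirit of the Alon--Peres approach that the paper builds upon. Since $X$ is infinite, I would first pick $K = K(\epsilon)$ distinct elements $x_1,\ldots,x_K \in X$, with $K$ taken large in terms of $1/\epsilon$. It then suffices to find a single positive integer $n$ such that the $K$-point sample $\{nx_1,\ldots,nx_K\}$ is $\epsilon$-dense in $\T$, since this set is contained in $nX$.

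To locate such an $n$, I would fix a nonnegative bump $\psi$ of mass one supported in $(-\epsilon,\epsilon) \subset \T$, with rapidly decaying Fourier coefficients $\hat\psi(k)$. For $y \in \T$ set $S_n(y) := \sum_{i=1}^K \psi(nx_i - y)$, so that $\{nx_i\}$ meets $(y-\epsilon, y+\epsilon)$ iff $S_n(y) > 0$. Expanding in Fourier series and averaging $|S_n(y) - K|^2$ over $n = 1,\ldots,N$ reduces the variance to a sum of Weyl averages of the form $\tfrac{1}{N}\sum_n e^{2\pi i k n (x_i - x_j)}$ with $i \ne j$; these are small provided $k(x_i - x_j) \notin \Z$. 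A pigeonhole over an $\epsilon$-net of $O(1/\epsilon)$ points in $\T$ then yields a single $n \leq N$ at which $S_n(y) > 0$ on the whole net, which gives the desired $\epsilon$-density.

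The main obstacle is the presence of rational relations among the $x_i$, which prevents the relevant Weyl sums from decaying. I would handle this by splitting into cases: if $X$ contains an arbitrarily large family that is $\Q$-linearly independent together with $1$, the averaging argument runs cleanly and one can even invoke Weyl's equidistribution theorem directly for the orbit $\{(nx_1,\ldots,nx_K) : n \in \Z\} \subset \T^K$; otherwise $X \subset \Q/\Z$, and since $X$ is infinite the denominators of its elements (in lowest terms) must be unbounded, after which an elementary argument based on the Chinese Remainder Theorem produces an $n$ for which $nX$ sweeps out a $(1/q)$-spaced subgroup with $q > 1/\epsilon$. Combining the two cases via compactness establishes the theorem.
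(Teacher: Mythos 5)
Your first two paragraphs are essentially the right approach (Fourier test function, variance over $n$, Weyl averages for $e(kn(x_i-x_j))$), and indeed match the spirit of the Alon--Peres machinery that the present paper packages in Proposition \ref{mainInequality} and Lemma \ref{multi-montgomery}. The problem is in your handling of the rational relations. Your proposed dichotomy is not exhaustive: the negation of ``$X$ contains arbitrarily large families $\Q$-linearly independent together with $1$'' is only that $X$ lies in a finite-dimensional $\Q$-subspace of $\R$ spanned by $1$ and finitely many irrationals, not that $X\subset\Q/\Z$. For instance $X=\{\tfrac12+\sqrt2/j : j\geq 1\}$ or $X=\{\sqrt2/j : j\geq 1\}$ admit no $3$-element subfamily independent with $1$ yet contain no rational points, so your two cases both fail to apply. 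Even within the second case, the CRT step is not right as stated: for a single dilation $n$, the set $nX=\{na_j/q_j\}$ is not a subgroup and does not ``sweep out a $(1/q)$-spaced subgroup'' merely because $X$ has unbounded denominators; dilating by the inverse of one $a/q$ modulo $q$ controls just that one point, not the set. Finally, ``combining via compactness'' is a loose end with no clear mechanism behind it.

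The fix, which is exactly what the paper's toolkit supplies and what makes the argument work without any case split, is to count the bad pairs rather than try to avoid them. After passing to the averaged inequality you obtain
\[
k^{2} \ll \frac{1}{\ep}\sum_{0<|m|\leq M}\,\sum_{i,j}\lim_{R\to\infty}\frac1R\sum_{r\leq R} e\bigl(mr(x_i-x_j)\bigr),
\]
and the inner Weyl average is $1$ when $m(x_i-x_j)\in\Z$ and $0$ otherwise. So the right-hand side is bounded by $\frac{1}{\ep}\sum_{0<|m|\le M} h_m$ with $h_m$ as defined in \eqref{hm}, and Proposition \ref{count} ($H_m\le km^2$) gives $\sum_{m\leq M} h_m \leq kM^2 \ll k/\ep^2$. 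Hence $k^2\ll k/\ep^3$, i.e.\ $k\ll\ep^{-3}$, and choosing $K$ larger than this threshold yields a contradiction. This counting step replaces your dichotomy entirely and handles all configurations of rational relations simultaneously; it is the essential idea you are missing.
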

Theorem \ref{Glasner} can be made effective in the sense that every sufficiently large subset $X$ has an $\ep$-dense dilation of the form $nX$ for some positive integer $n$, and `sufficiently large' can be quantified. The first result in this direction was obtained by Berend and Peres in \cite{MR1200973}. 
Given $\ep>0$, let $k(\ep)$ be the minimal integer $k$ such that for any set $X\subset \T$ of cardinality at least $k$, some dilation $n X$ is $\ep$-dense in $\T$. Berend and Peres showed that
					      \begin{equation} \label{glasnerEff0}
						      c/\ep^{2}\leq k(\ep)\leq (c_{1}/\ep)^{c_{2}/\ep}
					      \end{equation}
					where $c,c_{1},c_{2}$ are absolute constants.

The question of determining the correct order of magnitude of $k(\ep)$ was further studied in depth by Alon and Peres \cite{MR1143662}, who gave the bound  
\begin{equation} \label{glasnerEff}
k(\ep) \ll_{\delta} \left( \frac{1}{\epsilon} \right)^{2+\delta}
\end{equation}
for any $\delta>0$. This is almost best possible in view of (\ref{glasnerEff0}). Actually, they gave a more precise bound
\begin{equation} \label{glasnerEff2}
				k(\ep) \ll \left(\dfrac{1}{\ep}\right)^{2+\frac{3}{\log\log (1/\ep)}}.
\end{equation}

In \cite{MR1143662}, Alon and Peres provided two different approaches to this problem. On the one hand, the probabilistic approach gives more information about the dilation, such as its discrepancy. On the other hand, the second approach, using harmonic analysis, is particular suited when one is interested 
in dilating the set $X$ by a sequence of arithmetic nature, such as the primes or the squares. They proved

\newtheorem{alonPeres6.3}[glasner]{Theorem}
 			\begin{alonPeres6.3}[Alon-Peres] \label{alonPeres}
 						\begin{enumerate}[(i)]
 								\item \label{ap1} For any $\delta>0$, every set $X$ in $\T$ of cardinality 
 											\begin{equation*}
 													k \gg_{\delta} \dfrac{1}{\ep^{2+\delta}},
 											\end{equation*}
 											has an $\ep$-dense dilation $pX$ with $p$ prime.
 								\item \label{ap2} Let $f$ be a polynomial of degree $L>1$ with integer coefficients and let $\delta>0$. Then any set $X$ in $\T$ of cardinality 
 											\begin{equation*}
 													k \gg_{\delta, f} \left( \dfrac{1}{\ep} \right)^{2L+\delta},
 											\end{equation*}
 										has an $\ep$-dense dilation of the form $f(n)X$, for some $n\in\Z$.
 						\end{enumerate}
 			\end{alonPeres6.3}
 			
It is shown in \cite{MR1452815} that in part (ii) of the above theorem there is an $\ep$-dense dilation of the form $f(p)X$ where $p$ is a prime number.

In this paper we investigate high dimensional analogues of Glasner's theorem and the above results of Alon and Peres using Alon-Peres' harmonic analysis approach. One problem that comes to mind is that of determining the natural analogue of ``dilating by $n$'' in the one-dimensional case. Any continuous endomorphism of $\T$ is represented this way, so we may regard the dilation as the action by a continuous endomorphism. When considering higher dimensional generalizations of the above theorems we need not restrict ourselves from maps of a torus into itself. We will instead consider maps between tori of possibly different dimension. A continuous homomorphism between $\Tn$ and $\Tl$ is represented by left multiplication of an $L\times N$ matrix with entries in $\Z$. This will be our analogue of dilation. We say that a subset of $\Tl$ is $\ep$-\textit{dense} in $\Tl$ if it intersects any box of side length $2\ep$.

Our first theorem is a high dimensional analogue of Glasner's theorem.
\begin{theorem}\label{HDGlasner}
						For any $\ep>0$ and any infinite subset $X\subset \Tn$ there exists a continuous homomorphism $T:\Tn\ra\Tl$ such that $TX$ is $\ep$-dense in $\Tl$.
\end{theorem}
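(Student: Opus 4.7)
A two-stage argument: first reduce $X$ to an infinite set that densely generates an ambient subtorus of $\Tn$, then apply a multidimensional extension of the Alon--Peres harmonic-analytic method to construct the homomorphism. The attainable target dimensions are $1 \leq L \leq d'$, where $d' \leq N$ is the dimension of that subtorus; note that $L > N$ is vacuously impossible since any continuous $T: \Tn \to \Tl$ has image of dimension at most $N$. The trivial case $L=1$ can already be dispatched by taking a coordinate projection of $X$ that is infinite (one such exists by pigeonhole, else $X$ would embed in a finite product) and invoking Glasner; the substantive content is the higher-dimensional targets.

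\textbf{Step 1 (Structural reduction).} Let $H = \overline{\langle X - X \rangle} \subset \Tn$, and let $H_0$ be its identity component, a subtorus of dimension $d \geq 1$: positivity of $d$ uses that $X$ is infinite in the compact space $\Tn$, so $X - X$ accumulates at $\mathbf{0}$. Since $H/H_0$ is finite, fixing $x_0 \in X$ and pigeonholing the cosets of $H_0$ into which $X - x_0$ falls yields an infinite subset of $X$ lying in a single coset of $H_0$; after translating we may assume $X \subseteq H_0$. Iterating this procedure inside $H_0$ (dimensions strictly decrease at each non-terminal iteration) terminates after at most $N$ steps with an infinite $Y_0$ contained in, and such that $Y_0 - Y_0$ densely generates, a subtorus $T_* \subseteq \Tn$ of dimension $d'$. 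Choose a continuous surjection $\pi: \Tn \twoheadrightarrow \mathbb{T}^{d'}$ (a $d' \times N$ integer matrix $P$) whose restriction to $T_*$ is an isogeny, and set $Y := \pi(Y_0)$, an infinite set in $\mathbb{T}^{d'}$ that densely generates $\mathbb{T}^{d'}$. It now suffices, for each $L \leq d'$, to find an $L \times d'$ integer matrix $M$ with $MY$ being $\ep$-dense in $\Tl$; then $T := M\pi$ solves the original problem.

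\textbf{Step 2 (Harmonic-analytic construction of $M$).} Pass to a sufficiently large finite subset of $Y$ and let $\phi: \Tl \to \R$ be a smooth non-negative bump supported in $[-\ep, \ep]^L$ with $\int \phi = 1$. Then $MY$ is $\ep$-dense in $\Tl$ iff $g_M(t) := \sum_{y \in Y} \phi(My - t) > 0$ for all $t \in \Tl$. Fourier expansion yields
\[
g_M(t) \;=\; \sum_{\xi \in \Z^L} \widehat{\phi}(\xi)\, e(-\xi \cdot t)\, S_Y(M^{\top}\xi), \qquad S_Y(\eta) := \sum_{y \in Y} e(\eta \cdot y),
\]
with the $\xi=0$ main term equal to $|Y|$. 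Averaging $|g_M(t) - |Y||^2$ over matrices $M$ with entries in $\{1, \dots, Q\}$ for large $Q$, then invoking Parseval and the rapid decay of $\widehat{\phi}$, reduces matters to a uniform-in-$\xi$ estimate on $\sum_M |S_Y(M^{\top}\xi)|^2$. The dense-generation property of $Y$ in $\mathbb{T}^{d'}$ keeps the lattice vectors $\{M^{\top}\xi\}_M$ sufficiently spread in $\Z^{d'}$ to supply the needed cancellation, and an admissible $M$ follows.

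\textbf{Main obstacle.} The technical heart of the proof is the higher-dimensional exponential-sum estimate in Step 2: whereas the 1-D Alon--Peres argument averages a single integer $n$, here one averages over the entries of an $L \times d'$ matrix and must maintain cancellation uniformly across all $\xi \in \Z^L \setminus \{0\}$. The dense-generation hypothesis secured by Step 1 is essential: without it, $Y$ could sit in a proper subtorus of $\mathbb{T}^{d'}$ and some character sums $S_Y(M^{\top}\xi)$ would attain the full size $|Y|$, destroying cancellation. A subsidiary difficulty arises when $L < d'$, where $M^{\top}: \Z^L \to \Z^{d'}$ has non-trivial kernel, so several Fourier frequencies must be treated jointly.
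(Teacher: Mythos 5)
Your proposal founders on a conceptual error before the technical work begins: you assert that the attainable target dimensions are restricted to $1\leq L\leq d'\leq N$, and that ``$L>N$ is vacuously impossible since any continuous $T:\Tn\to\Tl$ has image of dimension at most $N$.'' This conflates $\ep$-density, which is a metric notion, with topological density. A one-dimensional subtorus of $\Tl$ can perfectly well be $\ep$-dense in all of $\Tl$: the image of $t\mapsto(q_{1}t,\dots,q_{L}t)$ with $q_{\ell}=(M+1)^{\ell-1}$ and $M=[L/\ep]$ winds around enough to meet every box of side $2\ep$. The theorem is stated, and is true, with no relation whatsoever between $L$ and $N$; indeed the paper's proof (via Theorem \ref{primitiveGlasner}) uses precisely a rank-one homomorphism: it pigeonholes a coordinate projection $Y_{i}\subset\T$ of $X$ with $\#Y_{i}\geq k^{1/N}$ and then composes with $t\mapsto\ba(n)t$ for a well-chosen primitive integer vector $\ba(n)=(q_{1}n,q_{2}n+1,\dots,q_{L}n)$, so that the analytic work takes place entirely in one source dimension. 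Your Step 1, by discarding everything outside the subtorus generated by $X-X$, locks you into $L\leq d'$ and therefore cannot prove the stated theorem; already for $X\subset\T\times\{\mathbf{0}\}\subset\T^{N}$ and $L=2$ your argument produces nothing, while the theorem still holds there.

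Separately, the heart of your Step 2 is asserted rather than proved, and the hypothesis you lean on is not the right one. ``Dense generation'' of $\T^{d'}$ by $Y-Y$ gives no quantitative control over $\sum_{M}|S_{Y}(M^{\top}\xi)|^{2}=\sum_{M}\sum_{y,y'}e\bigl(\xi\cdot M(y-y')\bigr)$: the obstruction to cancellation is pairs $y\neq y'$ whose difference is rational with small denominator, so that $\xi\cdot M(y-y')\in\Z$ for a positive proportion of the matrices $M$, and an infinite set can densely generate the torus while containing arbitrarily large Farey-like finite subsets. What is actually needed is a count of such near-collisions; in the paper this is the bound $H_{m}\leq km^{2}$ of Proposition \ref{count}, fed into the transference of non-$\ep$-density to exponential sums (Proposition \ref{mainInequality}). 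Your averaging framework is the right general strategy (it is the Alon--Peres method), but without that counting input the ``needed cancellation'' in your final sentence of Step 2 does not follow, and with the dimensional restriction of Step 1 the argument does not reach the theorem even where it works.
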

The proof of this result is similar to the proof of (\ref{glasnerEff}). Our main investigation, however, is an analogue of the fact that if $X \subset \T$ is infinite, then there is a dilation of the form $f(n)X$ that is $\ep$-dense, where $f(x)$ is a non-constant polynomial with integral coefficients. Let us introduce the set-up to this problem and lay out some of the complications that arise when moving to high dimensions. In this paper, a \textit{subtorus} of $\Tn$ is defined to be a non-trivial closed and connected Lie subgroup.

Let $\bA(x)\in\mM_{L\times N}(\Z[x])$ be non-constant and let $D$ be the positive integer representing the largest of the degrees of the entries of $\bA(x)$. Then there are $A_{0},...,A_{D}\in\mM_{L\times N}(\Z)$ such that 
		\[
		      \bA(x)=A_{0}+x A_{1}+\cdots +x^{D}A_{D}=A_{0}+\bA_{\ast}(x)
		\]
where $\bA_{\ast}(x)$ is the \textit{non-constant part} of $\bA(x)$. We wish to consider dilations of subsets $X\subset \Tn$ of the form $\bA(n)X$.

Simple examples show that, unlike Theorem \ref{HDGlasner}, there are configurations of $\bA(x)$ and $X$ for which $\bA(n)X$ is never $\ep$-dense in the full torus. Take, for instance, 
$\bA(n)=\begin{pmatrix} 
         n & 0\\
         0 & n
        \end{pmatrix}$
and $X$ to live in a proper subtorus, then $A(n)X$ is also in the same subtorus, for every $n$. Furthermore, if we take $X$ to be in a translate of a subtorus, then $A(n)X$ is also in a translate of a subtorus (where the translate depends of $n$). So the best one can hope for in this situation is to achieve an $\ep$-dense dilation in a \textit{translate of a subtorus}. Before stating our results, we give some examples to show that even this restriction is not always achieved.
\begin{example} \label{ex1}
If $\bA(n)=\begin{pmatrix} 
         n & 0\\
         0 & 0
        \end{pmatrix}$ and $X=\{(0,x): |x| \leq 1/4 \}$, then there is no value of $n$ such that $A(n)X$ is $1/4$-dense in a translate of a subtorus. Basically, this is because the matrix $\bA_{\ast}$ is degenerate in a sense so that $\bA(n)X$ doesn't ``move $X$ around.''
\end{example}

\begin{example} \label{ex2}
If $\bA(n)=\begin{pmatrix} 
         n & 0\\
         0 & n+1
         \end{pmatrix}$ and $X=\{(1/j, 1/j): j=1,2,\ldots \}$, then clearly $\bA(n)X$ is not $1/4$-dense in any translate of the diagonal. On the other hand, one can show that for any $n$, for any subtorus $\mT$ of $\T^2$ that is different from the diagonal, $\bA(n)X$ is not $\epsilon$-dense in any translate of $\mT$ (since the set of dot products of elements of $\bA(n)X$ with $(-1 \quad 1)$ has only one accumulation point). The reason of such a failure can be attributed to the lack of a compromise between the constant part and the non-constant part of $\bA$.
\end{example}

Our main result says that the only obstructions to $\ep$-dense dilations are the ones described in Examples \ref{ex1} and \ref{ex2}.

\begin{theorem}\label{hoangMike}
					 Let $\bA(x)\in\mM_{L\times N}(\Z[x])$. The following are equivalent:
					 
					 \begin{enumerate}
					\item \label{epdense} For any infinite subset $X\subset\Tn$ there exists a subtorus $\mT=\mT(X,\bA)$ of $\Tl$ such that for any $\ep>0$ there exists an integer $n$ such that $\bA(n)X=\lb \bA(n)\bx: \bx\in X \rb$ is 
					$\ep$-dense in a translate of $\mT$.
					  \item \label{cond}
					   		\begin{enumerate}[(a)]
					     \item \label{cond1} The columns of $\bA_{\ast}(x)$ are $\Q$-linearly independent, and
					     \item \label{cond2} If there are $\bv\in\Q^{L}$ and $\bw\in\Q^{N}$ satisfying 
								\begin{equation}
								      \bv\cdot A_{d}\bw=0 \;\;\;\;\text{ for each }\;d=1,...,D,
								\end{equation}
						  then $\bv\cdot A_{0}\bw=0$.
					    \end{enumerate}
					\end{enumerate}
\end{theorem}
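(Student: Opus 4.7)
I would prove the two directions by rather different methods.

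For necessity $(1) \Rightarrow (2)$, I argue by contrapositive. If (2a) fails, choose a primitive $\bw \in \Z^N \setminus \{\bzero\}$ with $A_d \bw = \bzero$ for every $d \geq 1$, and set $X := \{2^{-j}\bw \bmod \Z^N : j \geq J_0\}$ for $J_0$ large enough that the points are distinct in $\Tn$. Then $\bA(n)X = \{2^{-j} A_0 \bw : j \geq J_0\}$ is $n$-independent and contained in an $O(2^{-J_0})$-neighbourhood of $\bzero \in \Tl$, so it cannot $\ep$-cover any non-trivial subtorus translate once $\ep$ is small. If (2b) fails, choose $(\bv, \bw) \in \Z^L \times \Z^N$ with $\bv \cdot A_d \bw = 0$ for $d \geq 1$ and $\alpha := \bv \cdot A_0 \bw \neq 0$, and set $X := \{\bw/j \bmod \Z^N : j \geq J_0\}$. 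A direct computation yields $\bv \cdot \bA(n)X = \{\alpha/j : j \geq J_0\} \subset \T$ for every $n$, with $0$ as its sole accumulation point. For any candidate subtorus $\mT \subset \Tl$, the character $\bx \mapsto e^{2\pi i \bv \cdot \bx}$ either maps $\mT$ onto $\T$ (whence $\{\alpha/j\}$ fails to be $\ep\|\bv\|$-dense in $\T$) or kills $\mT$ entirely (whence $\bA(n)X$ is confined in a way that prevents it from $\ep$-covering any positive-dimensional translate $\mT + \bx_0$, exactly the obstruction of Example \ref{ex2}).

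For sufficiency $(2) \Rightarrow (1)$, I adapt the Fourier-analytic strategy of Alon--Peres in Theorem \ref{alonPeres} and of Theorem \ref{HDGlasner}. First, define $\mT = \mT(X, \bA)$ through its annihilator lattice
\[
\Lambda(\mT) := \bigl\{\bv \in \Z^L : \bv \cdot \bA(n)(\bx - \by) \in \Z \text{ for every } n \in \Z \text{ and every } \bx, \by \in X\bigr\},
\]
taking $\mT$ to be the identity component of the annihilator of $\Lambda(\mT)$ in $\Tl$. The main obstacle is to establish the rigidity that (2a)+(2b) imply: for every $\bv \in \Z^L \setminus \Lambda(\mT)$ there exist $\bx, \by \in X$ such that the polynomial $n \mapsto \bv \cdot \bA(n)(\bx - \by)$ has an irrational coefficient on some $n^d$ with $d \geq 1$. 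Here (2a) rules out the ``collapse'' of $\bA_\ast$ on $X$ seen in Example \ref{ex1}, and (2b) rules out the cancellation between $A_0$ and $\bA_\ast$ of Example \ref{ex2}.

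Granted the rigidity, pass to a finite subset $X' \subset X$ of large cardinality $K$ and form the empirical measure $\mu_n := K^{-1}\sum_{\bx \in X'}\delta_{\bA(n)\bx}$ on $\Tl$. For each fixed $\bv \in \Z^L \setminus \Lambda(\mT)$ expand the mean square
\[
\frac{1}{M}\sum_{n=1}^{M}\bigl|\widehat{\mu}_n(\bv)\bigr|^2 = \frac{1}{K^2}\sum_{\bx, \by \in X'} \frac{1}{M}\sum_{n = 1}^{M} e^{2\pi i \bv \cdot \bA(n)(\bx - \by)};
\]
the diagonal $\bx = \by$ contributes $O(K^{-1})$, and each off-diagonal inner sum is $o(1)$ as $M \to \infty$ by Weyl's inequality applied to the polynomial in $n$, the rigidity guaranteeing the irrationality needed for cancellation. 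A union bound over the $O_\ep(1)$ non-trivial characters of bounded height produces an integer $n$ at which $|\widehat{\mu}_n(\bv)| < \ep$ throughout this range, and the Erd\H{o}s--Tur\'an inequality on $\mT$ upgrades this Fourier decay into the required $\ep$-density of $\bA(n)X$ in a translate of $\mT$.
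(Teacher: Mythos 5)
Your necessity direction is essentially the paper's argument: harmonic/geometric sequences concentrated along a kernel vector, and Lemma \ref{oneTorusLemma}-type reasoning via the character $\bx \mapsto e^{2\pi i \bv\cdot\bx}$. Replacing $\{\bw/j\}$ with $\{2^{-j}\bw\}$ is cosmetic. That half is fine.

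The sufficiency direction, however, has a fatal gap: the ``rigidity'' you build the Fourier argument on is \emph{false}. Take $N=L=1$, $\bA(n)=n$, $X=\{1/j : j\geq 1\}$. Then $\Lambda(\mT)=\{0\}$ (so $1\notin\Lambda(\mT)$), yet every polynomial $n\mapsto v\cdot\bA(n)(x-y)=vn(1/i-1/j)$ has \emph{rational} coefficients; no pair $\bx,\by$ produces an irrational coefficient of positive degree. More generally, any infinite $X\subset\Q^N$ kills the rigidity outright. As a consequence, the step ``each off-diagonal inner sum is $o(1)$ by Weyl's inequality, the rigidity guaranteeing the irrationality needed for cancellation'' collapses. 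Even granting a corrected statement (non-integer coefficient of positive degree), individual off-diagonal averages are only bounded by $b(i,j)^{-1/D}$ via Hua's lemma, where $b(i,j)$ is the denominator, and $b(i,j)$ can be small for many pairs. Controlling the \emph{total} contribution requires counting pairs by denominator — this is exactly the role of the quantities $h_m$, $H_m$, Proposition \ref{count}, and Corollary \ref{alonPeresCorollary2} — and your outline omits this entirely. You also never verify that your $\mT$ (the connected annihilator of $\Lambda(\mT)$) is a nontrivial subtorus, which the conclusion requires.

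Your overall strategy — fix $\mT$ once and for all via an annihilator lattice and then run a direct Erd\H{o}s--Tur\'an / Weyl argument — is genuinely different from the paper's. The paper instead first factors $\bA = T\bB$ with $\bB_\ast$ of full rank (Lemma \ref{inductLem}), and then proves an \emph{inverse result} (Proposition \ref{infinitePtsLemma}, via Ramsey's theorem plus the Hua-and-counting estimate): if $\bB(r)X$ is never $\ep$-dense in $\T^\ell$, then infinitely many points of $X$ lie on a rational hyperplane $\{\bw\cdot(\by-\by_0)=J\}$, allowing an induction on $N$. The subtorus $\mT(X,\bA)$ emerges from this recursion rather than being defined upfront. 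That detour through the inverse result is not decoration — it is precisely what handles the all-rational configurations that defeat the direct argument you propose.
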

		
\begin{remarks}
 \noindent \begin{itemize}
  \item Theorem \ref{hoangMike} shows one how to construct matrices $\bA(n)$ such that the conclusion (\ref{epdense}) holds. The condition (\ref{cond1}) tells us how to choose the non-constant part $\bA_{\ast}(n)$, and the condition (\ref{cond2}) tells us that the constant part $A_0$ has to behave accordingly. 
  \item In the case $N=L=1$, (\ref{cond}) is automatically satisfied if $\bA$ is not constant, which explains why in Theorem \ref{alonPeres} (\ref{ap2}) we can take $f$ to be any non-constant polynomial.
  \item If we replace $\Q$ with $\C$ in (\ref{cond2}), then by Hilbert's Nullstellensatz, it would imply that $A_0$ is a linear combination of $A_1, \ldots, A_D$. It would be interesting to construct examples of $\bA$ satisfying (\ref{cond2}) without $A_0$ being a linear combination of $A_1, \ldots, A_D$.
 \end{itemize}
\end{remarks}

We also prove an effective form of this result. Define $k(\ep;L,N,\bA)$ to be the largest integer $k$ such that there exist $k$ distinct points $X=\{\bx_{1},...,\bx_{k}\}\subset\Tn$ such that $\bA(n)X=\lb \bA(n)\bx_{1},...,\bA(n)\bx_{k} \rb$ is not $\ep$-dense in any translate of any subtorus for any $n=1,2,3,...$.
 
\begin{theorem}\label{polynomialEffective}
Let $\bA(x)$ be of degree at most $D$ and satisfy (\ref{cond1}) and (\ref{cond2}) from Theorem \ref{hoangMike}. Then there are constants $c_{1}(N,L,D)$ and $c_{2}(N,L,D)$ such that 
						\begin{equation}
					k(\ep;L,N,\bA)\ll_{N,L,D} \|\bA_{\ast}\|_{\infty}^{c_{1}(N,L,D)}\left( \dfrac{1}{\ep} \right)^{c_{2}(N,L,D)}.
						\end{equation}
where $\|\bA_{\ast}\|_{\infty}$ is the max of the heights\footnote{\textup{Recall that the height of a polynomial is the maximum of the absolute values of its coefficients.}} of the entries of $\bA_{\ast}$.
\end{theorem}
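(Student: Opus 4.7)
The plan is to follow Alon--Peres' harmonic-analytic route to part (\ref{ap2}) of Theorem \ref{alonPeres}, adapted to the matrix setting. The first step is a Fourier criterion: there exist $B=B(L,\ep)$ polynomial in $1/\ep$ and a small $\eta=\eta(L,\ep)$ such that every finite $Y\subset\Tl$ which is not $\ep$-dense in any translate of any subtorus of $\Tl$ admits a non-trivial $\bxi\in\Zl$ with $\|\bxi\|_{\infty}\leq B$ and
\begin{equation*}
\Bigl|\sum_{\by\in Y}e(\bxi\cdot\by)\Bigr|\geq (1-\eta)|Y|.
\end{equation*}
If $Y$ is not $\ep$-dense in $\Tl$ itself, such a $\bxi$ is produced by a standard Fourier lemma in the style of Alon--Peres; the extension to translates of proper subtori is by induction on $L$, using that any near-concentration of $Y$ on a translate of a proper subtorus is itself witnessed by a bounded character, after which one quotients out that subtorus and applies the inductive hypothesis to the (smaller) image torus.

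Applying this criterion to $Y=\bA(n)X$ for each $n\in\{1,\ldots,M\}$ (with $M$ a parameter to be chosen) produces, for every $n$, a non-trivial $\bxi_n$ of bounded norm with $\bigl|\sum_{\bx\in X}e(\bxi_n\cdot\bA(n)\bx)\bigr|\geq(1-\eta)k$. Summing over all admissible $\bxi$ and passing to a $2s$-th moment,
\begin{equation*}
S\;:=\;\sum_{0<\|\bxi\|_{\infty}\leq B}\;\sum_{n=1}^{M}\Bigl|\sum_{\bx\in X}e(\bxi\cdot\bA(n)\bx)\Bigr|^{2s}\;\gg\;M(1-\eta)^{2s}k^{2s}.
\end{equation*}
Expanding and swapping sums, for fixed $\bxi$ and fixed $\by=\bx_{i_1}+\cdots+\bx_{i_s}-\bx_{j_1}-\cdots-\bx_{j_s}$ the innermost sum becomes the polynomial Weyl sum $\sum_{n=1}^{M}e(P_{\bxi,\by}(n))$ with $P_{\bxi,\by}(n)=\sum_{d=0}^{D}(\bxi\cdot A_{d}\by)\,n^{d}$. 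By Weyl's inequality this is much smaller than $M$ unless each non-constant coefficient $\bxi\cdot A_{d}\by$ ($d=1,\ldots,D$) is very close to a rational with denominator polynomial in $M$ and $\|\bA_{\ast}\|_{\infty}$.

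Here the hypotheses (\ref{cond1}) and (\ref{cond2}) enter. The $\Q$-linear independence of the columns of $\bA_{\ast}$ turns the simultaneous near-rationality of $\bxi\cdot A_{d}\by$ for $d\geq 1$ into a quantitative rigidity statement: $\by$ is forced into an affine rational sublattice whose codimension grows with the effective support of $\bxi$, with indices polynomial in $\|\bA_{\ast}\|_{\infty}$. Condition (\ref{cond2}), applied with $\bv=\bxi$ and a suitable rational truncation of $\by$, then guarantees that $\bxi\cdot A_{0}\by$ is likewise near-rational, ruling out precisely the obstruction of Example \ref{ex2} in which a misaligned constant part could preserve the Weyl sum at size $M$. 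Counting the tuples which collapse into the rigid sublattice yields an upper bound
\begin{equation*}
S\;\ll_{N,L,D}\;B^{L}\bigl(M^{1-c/D^{2}}k^{2s}\,+\,M\,k^{2s-c'}\bigr),
\end{equation*}
with implied constants polynomial in $\|\bA_{\ast}\|_{\infty}$. Matching this against the lower bound on $S$ and optimising $M$, $s$ and the Weyl parameters as polynomials in $1/\ep$ and $\|\bA_{\ast}\|_{\infty}$ forces $k\ll_{N,L,D}\|\bA_{\ast}\|_{\infty}^{c_{1}(N,L,D)}(1/\ep)^{c_{2}(N,L,D)}$.

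The main obstacle will be making the step that invokes (\ref{cond1}) and (\ref{cond2}) both quantitative and uniform: one needs a rigidity lemma simultaneously controlling all $D+1$ coefficients of $P_{\bxi,\by}$, uniformly across bounded $\bxi$, with denominators polynomial in $\|\bA_{\ast}\|_{\infty}$. The subtlety is that (\ref{cond2}) is a $\Q$-statement while $\by$ lives in $\Tn$, so a careful rational approximation and diophantine counting argument is needed to bridge the two.
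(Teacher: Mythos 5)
Your proposal follows a genuinely different route from the paper, and as written it has several gaps. The paper does not take $2s$-th moments at all: it applies Cauchy--Schwarz once (Proposition \ref{mainInequality}), bounds the resulting averaged Weyl sums by Hua's lemma after sorting pairs by denominator, and from this extracts an \emph{inverse theorem} (Proposition \ref{effectiveLemma}): if $\bB(n)X$ is never $\ep$-dense, a polynomially-large subset $Y\subset X$ satisfies a single integral linear relation $\bw\cdot(\by-\by_0)=J$. This structured subset is then pushed through the integral matrix $H$ of Lemma \ref{hProp} into $\T^{N-1}$, and the bound follows by induction on $N$ applied to the matrix $\bA H$. Conditions (\ref{cond1}) and (\ref{cond2}) enter only through Lemma \ref{inductLem}, which factors $\bA=T\bB$ with $\bB_{\ast}$ of full rank and $\|qT\|_{\infty}\ll\|\bA_{\ast}\|_{\infty}^{\ell}$; this fixes the target subtorus as $\mathrm{Im}(T)/\Zl$ once and for all, \emph{before} any exponential-sum estimates are made. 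Your plan instead tries to identify the subtorus dynamically, $n$ by $n$, inside the Fourier criterion, and to extract the structural information via a higher-moment circle-method argument.

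The decisive gap is the opening Fourier criterion. The claim that failure of $\ep$-denseness yields a single bounded frequency $\bxi$ with $\bigl|\sum_{\by\in Y}e(\bxi\cdot\by)\bigr|\geq(1-\eta)|Y|$ for small $\eta$ is much stronger than what is available: the Barton--Montgomery--Vaaler type inequality (Lemma \ref{multi-montgomery}) gives $\sum_{0<\|\bm\|_\infty\leq\lceil \ell/\ep\rceil}\bigl|\sum_{\by}e(\bm\cdot\by)\bigr|\geq |Y|/3$, so pigeonhole over the $\ll\ep^{-\ell}$ admissible frequencies yields only $\max_{\bxi}|\widehat Y(\bxi)|\gg\ep^{\ell}|Y|$. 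If one is forced to take $1-\eta\sim\ep^{\ell}$, your lower bound on $S$ degrades to $M\ep^{2s\ell}k^{2s}$, and matching this against the proposed upper bound produces a bound for $k$ whose exponent on $1/\ep$ \emph{grows} with $s$, so the higher moment does not help. Beyond this, the rigidity step is a genuine unaddressed issue, as you note: you would need to show that simultaneous near-rationality of $\bxi\cdot A_d\by$ for $d\geq 1$, with $\by$ a $2s$-fold signed sum of points of $X$ rather than a rational vector, forces $\by$ into a sublattice of index controlled polynomially in $\|\bA_{\ast}\|_{\infty}$ and $1/\ep$, and that (\ref{cond2}) then controls $\bxi\cdot A_0\by$ uniformly; no such quantitative lemma is supplied, and the counting of solution tuples (an additive-energy bound) is not carried out. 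Finally, the inductive treatment of ``translate of any subtorus'' via quotienting is underspecified: the offending subtorus may vary with $n$, whereas the paper avoids this entirely by isolating the canonical subtorus through Lemma \ref{inductLem} at the outset.
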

\begin{remark}
Theorem \ref{hoangMike} would be a mere consequence of Theorem \ref{polynomialEffective}, if not for the fact that the subtorus $\mT$ is independent of $\ep$ in the conclusion of Theorem \ref{hoangMike}.
\end{remark}
The exponents $c_1$ and $c_2$ can be given explicitly. We do not try to find the best possible exponents, since these are not known even in the case $N=L=1$, though our values can certainly be improved. Finally, we remark that it is straightforward to prove a version of Theorem \ref{polynomialEffective} in the spirit of \cite{MR1452815}, with bounds of the same quality, for dilations of the form $\bA(p)X$ where $p$ is prime. Indeed, the proof would proceed exactly the same way, albeit with an appropriate modification of Lemma \ref{hua}. We leave the details to the interested reader.

The paper is organized as follows. In Section \ref{prelim} we gather some useful facts that we need in our proofs, including Alon-Peres' machinery. In Section \ref{infinite} we prove Theorem \ref{hoangMike}, and in Section \ref{finite} we prove Theorem \ref{polynomialEffective}. In Section \ref{glasnerSection} we prove (a variant of) a quantitative version of Theorem \ref{HDGlasner}. Finally, in Section \ref{applications} we discuss some applications of our results.\\

\noindent\textbf{Acknowledgements.} We would like to thank Professor Noga Alon for a discussion regarding Proposition \ref{count} and Professor Jeffrey Vaaler for helpful comments during our investigation and during the preparation of this paper.

						      \section{Notation and preliminaries} \label{prelim}

\subsection{Notation} Throughout this paper, we will use Vinogradov's symbols $\ll$ and $\gg$. For two quantities $A,B$, we write $A \ll B$, or $B \gg A$ if there is a positive constant $c$ such that $|A|\leq cB$. If the constant $c$ depends on another quantity $t$, then we indicate this dependence as $A \ll_{t} B$. The numbers $N,L,D$ are fixed throughout this paper, so dependence on these quantities is implicitly understood.

Given a vector $\bv$, we denote by $\| \bv \|_{\infty}$ its usual sup norm. Given a matrix $A$, let us denote by $\| A \|_{\infty}$ the maximal of the absolute values of its entries. Finally, for a matrix $\bA(x)=A_{0}+x A_{1}+\cdots +x^{D}A_{D}$ whose entries a polynomials in $x$, we define $\| \bA \|_{\infty}=\max \{ \|A_{d} \|_{\infty}: d=0,1, \ldots, D \}$. While we use the same symbol for slightly different objects, the use should be clear from the context.

For $x \in \R$, we denote by $\| x \|$ the distance from $x$ to the nearest integer. For $\bx=(x_1, \ldots, x_{\ell}) \in \R^{\ell}$, let $\| \bx \| = \max_{i=1, \ldots, \ell} \|x_i\|$. In other words, $\| \bx \|$ denotes the distance from $\bx$ to the nearest integer lattice point under $\|\cdot \|_{\infty}$. 

Throughout the paper, we always identify a point in a torus $\T^{\ell}$ with its unique representative in $[0,1)^{\ell}$. This point of view is important, since it enables us to define subtori in terms of equations.
\subsection{Preliminaries}
Let $\lb x_{1},...,x_{k} \rb$ be a set of $k$ distinct numbers in $\T$. Define
	  \begin{equation}\label{hm}
		h_{m}=\#\lb (i,j):1\leq i,j\leq k\;\text{ and }m(x_{i}-x_{j})\in\Z \rb	
	  \end{equation}
and $H_{m}=h_{1}+\cdots+h_{m}$. The quantities $h_i, H_m$ certainly depend on the sequence $\lb x_{1},...,x_{k} \rb$, but we always specify the sequence we are working with. The numbers $h_{m}$ and $H_{m}$ appear in several of the arguments in \cite{MR1143662} and they will make an appearance in the proof of our main results. We will need the following simple estimate:
\begin{proposition} \label{count}
 $H_{m}\leq km^{2}.$
\end{proposition}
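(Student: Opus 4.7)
The plan is to prove the pointwise bound $h_\ell \leq k\ell$ for each positive integer $\ell$; summing from $\ell = 1$ to $m$ then gives
\[
H_m \leq k \sum_{\ell=1}^m \ell = \frac{km(m+1)}{2} \leq km^2.
\]

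For the pointwise bound, I would interpret $h_\ell$ via the group endomorphism $\phi_\ell : \T \to \T$ given by $\phi_\ell(x) = \ell x$. Directly from the definition,
\[
h_\ell = \#\bigl\{(i,j) \in \{1,\ldots,k\}^2 : \phi_\ell(x_i) = \phi_\ell(x_j)\bigr\},
\]
so $h_\ell$ counts ordered pairs in $\{x_1,\ldots,x_k\}$ lying in a common fiber of $\phi_\ell$. Since $\ker\phi_\ell = \{0, 1/\ell, \ldots, (\ell-1)/\ell\}$ has exactly $\ell$ elements, every fiber of $\phi_\ell$ in $\T$ consists of exactly $\ell$ points. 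Therefore the fiber partition of $\{x_1,\ldots,x_k\}$ has parts of sizes $c_1, c_2, \ldots$ satisfying $\sum_s c_s = k$ and $\max_s c_s \leq \ell$, whence
\[
h_\ell \;=\; \sum_s c_s^2 \;\leq\; \bigl(\max_s c_s\bigr)\sum_s c_s \;\leq\; \ell k.
\]

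I do not anticipate any real obstacle: the whole argument rests on the single structural fact that $x \mapsto \ell x$ is an $\ell$-to-one self-cover of $\T$, together with the elementary inequality $\sum_s c_s^2 \leq (\max_s c_s)(\sum_s c_s)$ for nonnegative reals. A dual approach, swapping the order of summation and counting, for each pair $(i,j)$, the number of $\ell \leq m$ with $\ell(x_i - x_j) \in \Z$, also works but is slightly less clean since one must separately handle the diagonal $i=j$ and keep track of the orders of the differences $x_i - x_j$ in $\T$.
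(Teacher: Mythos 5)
Your proof is correct and follows essentially the same line as the paper's: both rest on the single observation that the multiplication-by-$\ell$ map on $\T$ is $\ell$-to-one, yielding the intermediate bound $h_\ell \leq k\ell$, which is then summed over $\ell \leq m$. Your fiber-partition framing with $\sum_s c_s^2 \leq (\max_s c_s)\sum_s c_s$ is just a slightly more elaborate packaging of what the paper states directly as \emph{for fixed $i$ and $\ell$ there are at most $\ell$ values of $j$}, and the two arguments differ only in the order of summation.
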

\begin{proof}
 Observe that for fixed $i$ and $m$, there are at most $m$ values of $j$ such that $m(x_i-x_j) \in \Z$. Thus for fixed $i$, the number of couples $(j,m)$ 
 such that $m(x_i-x_j) \in \Z$ is at most $1+\cdots+M \leq M^2$. Summing this up over all $i$ gives the desired estimate. 
\end{proof}
\begin{remark}
Since we are not concerned with optimal exponents, this estimate will suffice for our purposes, but we note that it is shown in \cite{MR1143662} that the (essentially sharp) bound $H_{m}\ll_{\gamma} (mk)^{1+\gamma}$ holds for any $\gamma>0$. 
\end{remark}

\begin{corollary}\label{alonPeresCorollary2}
				If $\ms_{2},\ms_{3},...$ is a sequence of positive integers such that $\mS_{b}=\ms_{2}+\cdots+\ms_{b}\leq H_{b}$ and $\mS_{b}\leq k^{2}$, then 
					      \begin{equation}
						    \dsum_{b=2}^{\infty}\ms_{b}b^{-1/D} \ll_{D} k^{2-1/(2D)}.
					      \end{equation}
\end{corollary}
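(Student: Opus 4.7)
The plan is to apply Abel summation to reduce the sum to an expression involving $\mathtt{S}_b$, then split the resulting sum at $b = \sqrt{k}$ to exploit the two given bounds efficiently.

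First I would write $\mathtt{s}_b = \mathtt{S}_b - \mathtt{S}_{b-1}$ (with $\mathtt{S}_1 = 0$) and apply summation by parts:
\begin{equation*}
\sum_{b=2}^{B} \mathtt{s}_b b^{-1/D} = \mathtt{S}_B B^{-1/D} + \sum_{b=2}^{B-1} \mathtt{S}_b \bigl[b^{-1/D} - (b+1)^{-1/D}\bigr].
\end{equation*}
Since $\mathtt{S}_B \leq k^2$ and $B^{-1/D} \to 0$, the boundary term vanishes as $B \to \infty$. By the mean value theorem, $b^{-1/D} - (b+1)^{-1/D} \ll_D b^{-1-1/D}$, so it suffices to bound $\sum_{b=2}^{\infty} \mathtt{S}_b \, b^{-1-1/D}$.

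Next I would combine the two hypotheses on $\mathtt{S}_b$. By Proposition \ref{count}, $\mathtt{S}_b \leq H_b \leq k b^2$; the other bound gives $\mathtt{S}_b \leq k^2$. These two bounds are comparable precisely at $b \asymp \sqrt{k}$, so I would split the sum at that threshold. For $2 \leq b \leq \sqrt{k}$ use $\mathtt{S}_b \leq k b^2$:
\begin{equation*}
\sum_{b=2}^{\lfloor\sqrt{k}\rfloor} k b^2 \cdot b^{-1-1/D} = k \sum_{b=2}^{\lfloor\sqrt{k}\rfloor} b^{1-1/D} \ll_D k \cdot (\sqrt{k})^{2-1/D} = k^{2 - 1/(2D)}.
\end{equation*}
For $b > \sqrt{k}$ use $\mathtt{S}_b \leq k^2$:
\begin{equation*}
\sum_{b > \sqrt{k}} k^2 \cdot b^{-1-1/D} \ll_D k^2 \cdot (\sqrt{k})^{-1/D} = k^{2 - 1/(2D)}.
\end{equation*}
Adding the two pieces yields the claimed bound.

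There is no real obstacle here; the only mild subtlety is making sure the two hypotheses are used in the right regimes, and the choice $b = \sqrt{k}$ as the splitting point is dictated by balancing $k b^2$ against $k^2$. The estimate for the differences $b^{-1/D} - (b+1)^{-1/D}$ is standard and absorbs the constant $1/D$ into the implicit constant depending on $D$.
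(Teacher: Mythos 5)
Your proof is correct and follows essentially the same route as the paper: summation by parts, the mean-value estimate $b^{-1/D}-(b+1)^{-1/D}\ll_D b^{-1-1/D}$, and splitting at $b\asymp\sqrt{k}$ to use $\mathtt{S}_b\leq H_b\ll kb^2$ on the small range and $\mathtt{S}_b\leq k^2$ on the tail. (In fact you state the two regimes more clearly than the paper does — the paper's proof contains a typo, writing ``$b>\sqrt{k}$'' where it means $b\leq\sqrt{k}$ for the $kb^2$ bound.)
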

			\begin{proof} We follow the proof of a similar estimate in \cite{MR1143662}. For $b\geq \sqrt{k}$ use the  bound $\mS_{b}\leq k^{2}$ and if $b>\sqrt{k}$ use $\mS_{b}\leq H_{b}\ll kb^{2}$ so we have by summation by parts
					  \[
					    \dsum_{b=2}^{\infty}\mS_{b}\Big( b^{-1/D}-(b+1)^{-1/D} \Big)
					    \ll 
					    k^{2}k^{-1/(2D)}+k\dsum_{b=2}^{\sqrt{k}}b^{2} b^{-1/D-1}. 
					  \]
				      But \[ \dsum_{b=2}^{\sqrt{k}}b^{1-1/D}\ll_{D} k^{1-1/(2D)}.  \]
			\end{proof}

The following Lemma is a high dimensional analogue of an inequality used in the several of the results in \cite{MR1143662}. It may be regarded as a general principle 
which connects the lack of $\ep$-denseness to exponential sums.

\begin{proposition}\label{mainInequality}
	      Let $A(1),A(2),...$ be a sequence of linear transformations taking $\Tn$ to $\T^{\ell}$ and assume $X=\lb\bx_{1},...,\bx_{k}\rb$ is a subset of $\Tn$ of cardinality $k$ such that $\bA(n)X$ is not $\ep$-dense in $\T^{\ell}$ for any $n\in\Z$. Then for any $\ep>0$ there is an integer $0 \leq M \ll_{\ell} \ep^{-1}$ such that 
	      \begin{equation}\label{main}
		     k^{2}\ll_{\ell} \dfrac{1}{\ep^{\ell}}\underset{\bm\in\Z^\ell}{\dsum_{0<\|\bm\|_{\infty}\leq M}} \dsum_{i=1}^{k}\dsum_{j=1}^{k} \lim_{R \rightarrow \infty} \dfrac{1}{R}\dsum_{r=1}^{R} e_{\bm}\Big(\bA(r)(\bx_{i}-\bx_{j})\Big)
	      \end{equation}
	      where $e_{\bm}(\bt)=\exp(2\pi i \bm\cdot\bt)$.
\end{proposition}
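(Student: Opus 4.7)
The plan is to reduce (\ref{main}) to a per-$r$ Fourier lower bound and then average over $r$. The key observation is that, writing $\hat f_r(\bm) := \sum_i e_\bm(\bA(r)\bx_i)$, one has
\begin{equation*}
\sum_{i,j} e_\bm(\bA(r)(\bx_i - \bx_j)) = |\hat f_r(\bm)|^2 \geq 0.
\end{equation*}
Consequently, (\ref{main}) follows once I establish that there is a single $M \ll_\ell \ep^{-1}$ (not depending on $r$) such that, for every $r$ for which $\bA(r)X$ fails to be $\ep$-dense,
\begin{equation*}
\sum_{0 < \|\bm\|_\infty \leq M} |\hat f_r(\bm)|^2 \gg_\ell k^2 \ep^\ell.
\end{equation*}
Indeed, summing this per-$r$ bound over $r = 1, \ldots, R$, dividing by $R$, letting $R \to \infty$, and interchanging the finite sums over $\bm$ and over the $k^2$ pairs $(i,j)$ with the limit reproduces exactly (\ref{main}).

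To prove the per-$r$ bound, fix $r$ and let $B_r = \mathbf{c}_r + (-\ep, \ep)^\ell$ be a box of side length $2\ep$ avoided by $\bA(r)X$. I would use a Beurling--Selberg-type trigonometric polynomial $\Phi : \T^\ell \to \R$ that minorizes $\chi_{(-\ep, \ep)^\ell}$ pointwise, with Fourier support in $\|\bm\|_\infty \leq M$ for some $M \ll_\ell 1/\ep$, positive mean $\hat\Phi(\bzero) \geq c_\ell \ep^\ell$, and $L^2$ control $\|\Phi\|_2^2 \leq C_\ell \ep^\ell$. Setting $\Phi_r(\bt) := \Phi(\bt - \mathbf{c}_r)$, the minorant property combined with the avoidance of $B_r$ gives
\begin{equation*}
0 \geq \sum_i \Phi_r(\bA(r)\bx_i) = \hat\Phi(\bzero)\,k + \sum_{0 < \|\bm\|_\infty \leq M} \hat\Phi(\bm)\, e_\bm(-\mathbf{c}_r)\, \hat f_r(\bm).
\end{equation*}
Isolating the $\bm = \bzero$ term and invoking Cauchy--Schwarz together with Parseval ($\sum_\bm |\hat\Phi(\bm)|^2 = \|\Phi\|_2^2$) then yields
\begin{equation*}
(c_\ell \ep^\ell k)^2 \leq \Big(\sum_{0 < \|\bm\|_\infty \leq M} |\hat\Phi(\bm)|^2\Big)\Big(\sum_{0 < \|\bm\|_\infty \leq M} |\hat f_r(\bm)|^2\Big) \leq C_\ell \ep^\ell \sum_{0 < \|\bm\|_\infty \leq M} |\hat f_r(\bm)|^2,
\end{equation*}
which rearranges to the desired per-$r$ bound.

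The main obstacle is the construction of the multidimensional minorant $\Phi$ with all three control parameters simultaneously. In one dimension this is classical Beurling--Selberg, but in higher dimensions the naive tensor product of 1D minorants fails: outside the box, an even number of 1D factors may be negative, making their product positive and violating the minorant inequality on the exterior. A genuine $\ell$-dimensional construction is required, for instance in the Holt--Vaaler framework, or by combining 1D majorants and minorants via an inclusion--exclusion identity and then correcting. A more elementary but slightly weaker alternative, losing only polylogarithmic factors in $M$, is to replace the minorant approach by a high enough power of the $\ell$-fold Fejér product kernel, pairing this nonnegative test function against the avoidance information in place of the strict minorant inequality.
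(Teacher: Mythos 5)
Your overall logic is sound and runs closely parallel to the paper's: both arguments reduce Proposition~\ref{mainInequality} to a per-$r$ lower bound of the form $\sum_{0<\|\bm\|_\infty\le M}|\hat f_r(\bm)|^2 \gg_\ell k^2\ep^\ell$ (your identity $\sum_{i,j}e_\bm(\bA(r)(\bx_i-\bx_j))=|\hat f_r(\bm)|^2$ and the averaging over $r$ are correct and exactly mirror the paper), and both then need a nontrivial extremal trigonometric-polynomial input in $\ell$ variables. The difference is how that input is packaged. The paper simply invokes Lemma~\ref{multi-montgomery}, a restatement of Corollary~2 of Barton--Montgomery--Vaaler, which states that if $\|\bxi_i\|\ge\ep$ for all $i$ then $k/3\le\sum_{0<\|\bm\|_\infty\le[\ell/\ep]}\big|\sum_i e_\bm(\bxi_i)\big|$; one application of Cauchy--Schwarz, dividing by the number $\asymp M^\ell\asymp\ep^{-\ell}$ of lattice points, then yields the $L^2$ bound, and the paper is done. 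You instead try to obtain the $L^2$ bound directly via a multidimensional Selberg-type minorant $\Phi\le\chi_{(-\ep,\ep)^\ell}$ with controlled degree, mean, and $L^2$ norm, followed by Parseval and Cauchy--Schwarz. That derivation is formally correct \emph{if} such a $\Phi$ exists with $M\ll_\ell\ep^{-1}$, $\hat\Phi(\bzero)\gg_\ell\ep^\ell$, and $\|\Phi\|_2^2\ll_\ell\ep^\ell$ simultaneously.

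The gap, which you yourself flag, is that you do not supply this multidimensional minorant, and you correctly observe that tensoring 1D Selberg minorants does not work. This is precisely the content the paper offloads to Barton--Montgomery--Vaaler; their construction of extremal trigonometric polynomials adapted to boxes in $\T^\ell$ with support in $\|\bm\|_\infty\le M$ is exactly what fills the hole you identify. So your ``main obstacle'' is not a side remark but the whole of the nontrivial input, and as written the argument is incomplete: either you must carry out a genuine $\ell$-dimensional extremal construction (Holt--Vaaler for balls does not directly give boxes with the $\ell_\infty$-degree bound you want), or you should cite Barton--Montgomery--Vaaler (or an equivalent result) as the paper does. Your fallback via a power of the Fej\'{e}r product kernel would work but degrades $M$ by logarithmic factors, recovering Alon--Peres's original $M\ll(1/\ep)\log^2(1/\ep)$ rather than the clean $M\ll 1/\ep$; as the paper notes, this loss is harmless for the final bounds, but it is not the statement as written. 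In short: right strategy, correct reductions, but the key lemma is asserted rather than proved or properly sourced.
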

Alon-Peres proved the one-dimensional version of Lemma \ref{mainInequality} using a classical result of Denjoy and Carleman, and obtained the same inequality with
$M \ll (1/\ep)\log^2 (1/ \ep)$. Their method can be extended in a straightforward manner to higher dimensions. As pointed out to us by Vaaler, one could as well use the machinery developed by Barton-Montgomery-Vaaler \cite{bmv} to improve this to $M \ll 1/\ep$. We will follow the latter approach in our proof of Proposition \ref{mainInequality} since it gives us a cleaner value for $M$, though this is inconsequential. Indeed, even in the case $N=L=1$, this improved value of $M$ does not lead to any improvement on Alon-Peres' bound (\ref{glasnerEff2}).

We first recall the following consequence of \cite[Corollary 2]{bmv}:
\begin{lemma} \label{multi-montgomery}
Let $0< \ep \leq 1/2$. Let $\bxi_1, \ldots, \bxi_{k} \in \R^{\ell}$ be such that $\| \bxi_{i} \| \geq \epsilon$ for any $i=1, \ldots, \ell$. Then we have
\[
\dfrac{k}{3} \leq \sum_{ \substack{\bm \in \Z^{\ell} \\ 0<\|\bm\|_{\infty}\leq \left[ \frac{\ell}{\ep} \right] } } \left| \sum_{i=1}^{k} e_{\bm}(\bxi_{i}) \right|
\]
\end{lemma}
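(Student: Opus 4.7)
The plan is to deduce this from a Vaaler-type extremal trigonometric polynomial in $\ell$ variables provided by \cite[Corollary 2]{bmv}. Concretely, I would apply that corollary to obtain a real trigonometric polynomial $V : \T^{\ell} \to \R$ whose Fourier spectrum is contained in the box $\{\bm \in \Z^{\ell} : \|\bm\|_{\infty} \leq [\ell/\ep]\}$, satisfying $V(\bx) \leq 0$ whenever $\|\bx\| \geq \ep$, and with $\hat V(\bzero) > 0$ obeying $|\hat V(\bm)| \leq 3\,\hat V(\bzero)$ for every $\bm \in \Z^{\ell}\setminus\{\bzero\}$. The existence of such a $V$ is the multi-dimensional analogue of Vaaler's minorant for a short interval on $\T$; the tensor-product (or box) construction in \cite{bmv} preserves both the support bound and the coefficient bounds, and the ratio $3$ is very loose compared with the explicit bounds $\hat V(\bzero) \sim (2\ep)^{\ell}$ and $\max_{\bm\neq \bzero}|\hat V(\bm)| = O\!\bigl((2\ep)^{\ell-1}\ep/\ell\bigr)$ coming from the one-dimensional Vaaler polynomial of degree $M = [\ell/\ep]$.

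Granting $V$, the lemma reduces to a one-line Fourier computation. Since $\|\bxi_{i}\| \geq \ep$ for every $i$, we have $V(\bxi_{i}) \leq 0$, so summing and expanding $V$ in its finite Fourier series gives
\[
0 \;\geq\; \sum_{i=1}^{k} V(\bxi_{i}) \;=\; \hat V(\bzero)\,k \;+\; \sum_{0 < \|\bm\|_{\infty} \leq [\ell/\ep]} \hat V(\bm) \sum_{i=1}^{k} e_{\bm}(\bxi_{i}).
\]
Rearranging and using the triangle inequality together with the bound $|\hat V(\bm)| \leq 3\,\hat V(\bzero)$ gives
\[
\hat V(\bzero)\,k \;\leq\; 3\,\hat V(\bzero) \sum_{0 < \|\bm\|_{\infty} \leq [\ell/\ep]} \left|\, \sum_{i=1}^{k} e_{\bm}(\bxi_{i}) \,\right|,
\]
and dividing through by $3\,\hat V(\bzero)$ yields the stated inequality.

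The main (and essentially only) obstacle is the invocation of \cite[Corollary 2]{bmv}: one needs the multivariate extremal polynomial simultaneously having (a) the stated Fourier support, (b) the pointwise sign condition off the $\ep$-cube in $\|\cdot\|$-distance, and (c) the favorable ratio between the mean value and the remaining Fourier coefficients. In one variable this is Vaaler's classical construction obtained from a Beurling-Selberg minorant, where the sign and ratio follow from explicit formulas for $\hat V(0)$ and $\hat V(m)$; the $\ell$-variable version preserves all three properties through the box construction of Barton-Montgomery-Vaaler, and the choice $M = [\ell/\ep]$ in the lemma is precisely what makes the tensorized ratio comfortably exceed $1/3$.
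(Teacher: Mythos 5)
The paper does not actually prove Lemma \ref{multi-montgomery}; it states it as a direct consequence of \cite[Corollary 2]{bmv} with no argument supplied. So there is no ``paper's proof'' to compare against, and what matters is whether your proposal actually establishes the inequality.

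Your overall skeleton (take a trigonometric polynomial $V$ with Fourier spectrum in the box, $V\leq 0$ off the $\ep$-cube, $\hat V(\bzero)>0$, and $|\hat V(\bm)|\leq 3\hat V(\bzero)$; sum $V(\bxi_i)\leq 0$ over $i$; expand and rearrange) is the right shape, and the final ``one-line Fourier computation'' is correct once such a $V$ is granted. The problem is that the $V$ you grant yourself does not come from the construction you cite. You describe $V$ as obtained by a ``tensor-product (or box) construction'' from one-dimensional Vaaler minorants, and you claim this ``preserves both the support bound and the coefficient bounds.'' But a tensor product $V(\bx)=\prod_j v_j(x_j)$ of one-dimensional minorants of $\mathbf{1}_{(-\ep,\ep)}$ is \emph{not} a minorant of $\mathbf{1}_{(-\ep,\ep)^{\ell}}$, and in particular does not satisfy the required sign condition: if two coordinates $x_{j_1}, x_{j_2}$ both satisfy $\|x_{j}\|\geq\ep$, then $v_{j_1}(x_{j_1})\leq 0$ and $v_{j_2}(x_{j_2})\leq 0$, so their product is $\geq 0$, and generically $V(\bx)>0$ at such a point even though $\|\bx\|\geq\ep$. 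So property (b) in your plan fails for the function you construct, and the chain $0\geq\sum_i V(\bxi_i)$ breaks. This is not a small gap: it is precisely the obstruction that makes multivariate Beurling--Selberg extremal problems genuinely harder than their one-dimensional counterparts, and it is the reason Barton--Montgomery--Vaaler (and, earlier, Selberg) do \emph{not} simply tensor. The actual construction combines one-dimensional majorants $M_j$ and minorants $m_j$ of the interval indicator in a cross-term formula such as $\sum_j m_j(x_j)\prod_{i\neq j}M_i(x_i) - (\ell-1)\prod_j M_j(x_j)$, whose nonpositivity off the cube is what rescues the sign; the Fourier-coefficient bookkeeping for that expression is materially different from the product bounds $\hat V(\bzero)\sim(2\ep)^{\ell}$ and $|\hat V(\bm)|=O((2\ep)^{\ell-1}\ep/\ell)$ that you quote, and obtaining the stated numerical constant is then a separate computation one must actually carry out (or read off from \cite{bmv}). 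In short: the reduction to an extremal function is fine, but the extremal function you invoke does not exist by the route you describe, and the missing idea --- how to get a genuine multivariate minorant of the box indicator out of one-dimensional pieces --- is exactly the content of the cited result.
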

\begin{proof}[Proof of Lemma \ref{mainInequality}] For any $r$, since $\bA(r)X$ is not $\ep$-dense in $\T^{\ell}$, there exists $\balpha_{r} \in \R^{\ell}$ such that $\| \balpha_{r} - \bA(r) \bx_i \| \geq \ep$ for any $i=1, \ldots, k$.
Let $M=\left[ \frac{\ell}{\ep} \right]$. By Lemma \ref{multi-montgomery}, we have
\[
\dfrac{k}{3} \leq \sum_{ \substack{\bm \in \Z^{\ell} \\ 0<\|\bm\|_{\infty}\leq M } } \left| \sum_{i=1}^{k} e_{\bm}(  \balpha_{r} - \bA(r) \bx_i) \right|
\]
By Cauchy-Schwarz, we have
\begin{eqnarray*}
k^2 &\ll_{\ell}& M^{\ell} \sum_{ \substack{\bm \in \Z^{\ell} \\ 0<\|\bm\|_{\infty}\leq M } } \left| \sum_{i=1}^{k} e_{\bm}(  \balpha_{r} - \bA(r) \bx_i) \right|^2 \\
&\ll_{\ell}& \dfrac{1}{\ep^{\ell}} \sum_{ \substack{\bm \in \Z^{\ell} \\ 0<\|\bm\|_{\infty}\leq M } } \sum_{i=1}^{k} \sum_{i=1}^{k} e_{\bm}(  \bA(r) (\bx_i-\bx_j))
\end{eqnarray*}
This is true for any $r$ so by taking the average of the right hand side over $1 \leq r \leq R$, we have
\[
k^{2}\ll_{\ell} \dfrac{1}{\ep^{\ell}}\underset{\bm\in\Z^\ell}{\dsum_{0<\|\bm\|_{\infty}\leq M}} \dsum_{i=1}^{k}\dsum_{j=1}^{k} \dfrac{1}{R}\dsum_{r=1}^{R} e_{\bm}\Big(\bA(r)(\bx_{i}-\bx_{j})\Big)
\]
Letting $R \rightarrow \infty$ we have the desired inequality.
\end{proof}
We also recall the following classical estimate due to Hua \cite{chen, necaev}:

\begin{lemma}[Hua] \label{hua}
		  Suppose $f(x)=a_{d}x^d +\cdots a_{1}x+a_{0}\in\Z[x]$ and $q$ is a positive integer such that $\gcd(a_{1},...,a_{d},q)=1$. Then
			      \[
				  \left| \dsum_{r=1}^{q}e^{2\pi i f(r)/q}\right| \ll_{d} q^{1-1/d} .
			      \]
\end{lemma}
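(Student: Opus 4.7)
The strategy is the classical two-step reduction: first use the Chinese Remainder Theorem to reduce to the case where $q$ is a prime power, then treat $q=p^k$ by $p$-adic Taylor expansion and induction on $k$.

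For the CRT step, suppose $q=q_1q_2$ with $\gcd(q_1,q_2)=1$, and choose $u,v\in\Z$ with $uq_1+vq_2=1$. Writing $\tfrac{f(r)}{q_1q_2}=\tfrac{uf(r)}{q_2}+\tfrac{vf(r)}{q_1}$ and parametrizing $r\pmod{q_1q_2}$ through CRT yields the factorization
\[
S(f,q_1q_2)=S(vf,q_1)\cdot S(uf,q_2),
\]
where $S(g,q):=\sum_{r=1}^{q}e^{2\pi i g(r)/q}$. Since $\gcd(v,q_1)=\gcd(u,q_2)=1$, the hypothesis $\gcd(a_1,\ldots,a_d,q)=1$ transfers to each factor, so it suffices to prove $|S(f,p^k)|\ll_d p^{k(1-1/d)}$ and multiply over the prime-power factors of $q$.

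For the prime-power case with $k\geq 2$, I would write each $r\in\{0,\ldots,p^k-1\}$ uniquely as $r=r_1+p^{k-1}r_2$ with $r_1\in\{0,\ldots,p^{k-1}-1\}$ and $r_2\in\{0,\ldots,p-1\}$. The integer-coefficient Taylor expansion shows that
\[
f(r_1+p^{k-1}r_2)\equiv f(r_1)+p^{k-1}r_2 f'(r_1)\pmod{p^k},
\]
because the remaining terms carry a factor of $p^{j(k-1)}$ with $j\geq 2$, which is divisible by $p^k$. Summing over $r_2$ contributes a factor $p$ when $p\mid f'(r_1)$ and $0$ otherwise. Splitting $r_1=t+ps$ over residues $t$ with $p\mid f'(t)$ and Taylor-expanding once more produces $f(t+ps)\equiv f(t)+p^2 g_t(s)\pmod{p^k}$ for a new integer polynomial $g_t$ of degree at most $d$, which gives the recursion
\[
|S(f,p^k)|\leq p\,|\mathcal{R}|\max_{t\in \mathcal{R}}|S(g_t,p^{k-2})|,
\]
where $\mathcal{R}=\{t\in\{0,\ldots,p-1\}:p\mid f'(t)\}$. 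By Lagrange's theorem $|\mathcal{R}|\leq d-1$ whenever the leading coefficient of $f'$ is a unit modulo $p$, and iterating the recursion down to the base cases $k\in\{0,1\}$ (handled trivially, or via Weyl's bound for $k=1$) yields the desired estimate $|S(f,p^k)|\ll_d p^{k(1-1/d)}$ after a careful exponent accounting.

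The main obstacle is the degenerate situation in which $p$ divides many of the leading coefficients of $f'$: then $|\mathcal{R}|$ can be as large as $p$, Lagrange's bound fails, and the new polynomial $g_t$ may not satisfy the coprimality hypothesis so the induction stalls. I would handle this by stratifying according to the $p$-adic valuations of the coefficients of $f$: extract the largest common power $p^v$ dividing $a_1,\ldots,a_d$, absorb it into the denominator to produce an exponential sum of the form $S(f_1,p^{k-v})$ on a smaller modulus for which the leading coefficients are coprime to $p$, and apply the non-degenerate argument there; the resulting loss is harmless because $p^{-v/d}$ and the finitely many small-$p$ anomalies are absorbed into the $\ll_d$ constant.
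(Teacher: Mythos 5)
The paper does not prove this lemma; it is quoted as a classical estimate of Hua with references to Chen and Nechaev, so there is no in-paper argument to compare against and your sketch has to stand on its own. Its skeleton is the standard one for Hua's bound --- CRT reduction to prime powers, then $p$-adic descent --- and the local identities you write are correct: $S(f,q_1q_2)=S(vf,q_1)S(uf,q_2)$, the congruence $f(r_1+p^{k-1}r_2)\equiv f(r_1)+p^{k-1}r_2 f'(r_1)\pmod{p^k}$ for $k\ge 2$, the killing of the $r_2$-sum off the roots of $f'\bmod p$, and the two-step recursion in $k$. But the two steps that actually carry the weight of the lemma are left as gestures, and as written the argument does not yet deliver the stated conclusion.

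First, multiplying local bounds across prime powers yields $|S(f,q)|\le C(d)^{\omega(q)}q^{1-1/d}$, and $C(d)^{\omega(q)}$ is not $\ll_d 1$: it is unbounded along highly composite $q$. To close this you must show that for all but $O_d(1)$ primes one has $|S(f,p^k)|\le p^{k(1-1/d)}$ with constant exactly $1$ (your recursion does give the extra factor $(d-1)p^{2/d-1}$, which is $<1$ once $p\gg_d 1$ and $d\ge 3$, so this is achievable, but it has to be said), and then treat the finitely many small primes separately; otherwise the CRT step produces $q^{1-1/d+o(1)}$ rather than $\ll_d q^{1-1/d}$. Second, the descent is not self-sustaining: the derived polynomial $g_t(s)=p^{-2}\big(f(t+ps)-f(t)\big)$ need not satisfy $\gcd(\text{coeffs of }g_t,p)=1$, so you cannot invoke the induction hypothesis for $S(g_t,p^{k-2})$. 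Your proposed fix --- extract $p^v$ from the coefficients --- is aimed at $f$, where the lemma's hypothesis already forces $v=0$; the obstruction lives in $g_t$ and in the possibility $f'\equiv 0\pmod p$ (which Lagrange's bound $|\mathcal{R}|\le d-1$ needs to exclude). For $p>d$ the latter is ruled out by the $\gcd$ hypothesis since $p\nmid j$ for $1\le j\le d$, but the handling of $g_t$ and of the $O_d(1)$ small primes requires a genuine additional device (e.g., a valuation-stratified induction on a quantity other than $k$ alone), and ``the resulting loss is harmless'' is not a proof of that. These are precisely the technical pivot points in the cited treatments; your outline names the difficulty but does not resolve it.
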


						  \section{The infinite version} \label{infinite}

Of the two implications, the implication (\ref{epdense}) $\Rightarrow$ (\ref{cond}) is the more difficult so let us begin by quickly proving the implication (\ref{cond}) $\Rightarrow$ (\ref{epdense}). We will need the following lemma in the proof of the necessity of (\ref{cond2}). The assertion of the lemma is that by taking the dot product with a vector $\bv$, an $\ep$-dense subset of a torus becomes an $\tilde{\ep}$-dense set in $\T$ where $\tilde{\ep}$ is comparable to $\ep$, as long as $\bv$ is not orthogonal to the original torus.

\begin{lemma}\label{oneTorusLemma}
Let $\ep>0$, $\bb\in\Rl$, $V$ a proper subspace of $\Rl$, $\bv \in \Zl, \bv \not \in V^{\perp}$, and
\[ X \subset S=\lb \bb+\bx+\Zl: \bx\in V \rb\subset \Tl.\] 
If $X$ is $\ep$-dense in $S$, then $\lb \bv\cdot\bx+\Z:\bx\in X \rb$ is $L\|\bv\|_{\infty}\ep$-dense in $\T$. 
\end{lemma}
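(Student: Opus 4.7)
The plan is to push the $\ep$-density of $X$ through the linear map $\phi:\Tl\to\T$ defined by $\phi(\bx)=\bv\cdot\bx$. This map is well-defined on the torus because $\bv\in\Zl$, and it is Lipschitz with a constant that we can control by $\|\bv\|_{\infty}$ and the ambient dimension.

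First I would verify that $\phi(S)=\T$, which is where the hypothesis $\bv\notin V^{\perp}$ enters. Since $V$ is a subspace of $\Rl$, the image $\bv\cdot V$ is a vector subspace of $\R$, hence either $\{0\}$ or all of $\R$; the non-orthogonality hypothesis rules out the first possibility. Reducing modulo $\Z$ and translating by $\bv\cdot \bb$ shows that $\phi(S)=\T$. So given an arbitrary $t\in\T$, I can pick $\by\in S$ with $\phi(\by)=t$.

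Next, using that $X$ is $\ep$-dense in $S$, choose $\bx\in X$ with $\|\bx-\by\|\leq \ep$ in the torus distance. Unpacking the definition of $\|\cdot\|$ from Section~\ref{prelim}, there is some $\bn\in\Zl$ whose coordinates satisfy $|(\bx-\by-\bn)_i|\leq \ep$ for every $i$. Since $\bv\in\Zl$ we have $\bv\cdot\bn\in\Z$, so
\[
\|\bv\cdot\bx-\bv\cdot\by\|\leq |\bv\cdot(\bx-\by-\bn)|\leq \|\bv\|_1\cdot \ep\leq L\|\bv\|_{\infty}\ep,
\]
and the left-hand side equals $\|\bv\cdot\bx-t\|$ modulo $\Z$. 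Since $t\in\T$ was arbitrary, this gives the claimed $L\|\bv\|_{\infty}\ep$-density.

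There is no real obstacle beyond making sure of the Lipschitz constant: the only thing to be careful about is that the projection from $\Rl$ to $\Tl$ interacts correctly with $\phi$, which is exactly why one needs $\bv\in\Zl$ so that $\phi$ descends to a map on tori and sends integer translates to integer translates.
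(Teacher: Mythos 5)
Your proof is correct and follows essentially the same route as the paper: pick a preimage of $t$ under $\bx\mapsto\bv\cdot\bx$ inside $S$ using $\bv\notin V^{\perp}$, invoke $\ep$-density of $X$ in $S$, and bound $|\bv\cdot(\bx-\by-\bn)|$ by $L\|\bv\|_{\infty}\ep$. Your version is slightly more careful about the affine shift $\bb$ (the paper silently takes $\ba\in V$ rather than a representative of a point in $S$), but the argument is the same.
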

\begin{proof}
	    Let $t\in\T$. We want to find a $\bx\in X$ such that $\bv\cdot\bx$ is contained in an interval of length $2L\|\bv\|_{\infty}\ep$ in $\T$ centered at $t$. That is we wish to show the existence of an $\bx\in X$ such that $\|\bv\cdot\bx-t\|\leq L \|\bv\|_{\infty}\ep$.
	    
Since $\bv \not \in V^{\perp}$ we may write $t=\bv\cdot \ba$ for some $\ba\in V$. And since $X$ is $\ep$-dense in $S$ there exists an $\bx\in X\cap S$ and a $\bw\in \Zl$ such that $\|\bx-\ba-\bw\|_{\infty}\leq \ep$. But since $\bv\cdot\bw\in\Z$ we have
				  \[
					\|\bv\cdot\bx-t\|=\|\bv\cdot(\bx-\ba-\bw)\|\leq |\bv\cdot(\bx-\ba-\bw)|\leq L\|\bv\|_{\infty}\ep.
				  \]
\end{proof}

\begin{proof}[Proof of necessity of (\ref{cond1})]
Suppose, by way of contradiction, that the columns of $\bA_{\ast}(x)$ are not $\Q-$linearly independent. Then there is a nonzero $\bm\in\Q^{N}$ such that 
		\[
		      \bA_{\ast}\bm=0.
		\]
If \[X=\lb \bm/j : j=1,2,.... \rb,\] then $\bA(n)X=A_{0}X=\lb \bx_j= A_{0}\bm/j:j=1,2,.... \rb$ which is not $\ep$-dense in a translate of a subtorus for any sufficiently small $\ep>0$.
\end{proof}

\begin{proof}[Proof of necessity of (\ref{cond2})]
Suppose that there are vectors $\bv\in\Zl$ and $\bw\in\Zn$ such that 
			\[
			      \bv\cdot A_{d}\bw=0 \;\;\;\text{ for each }d=1,...,D
			\]
but $\bv\cdot A_{0}\bw=t\neq0$. In particular $\bv \neq \bzero$ and $\bw \neq \bzero$. Let $X=\lb\bw/j:j=1,2,...  \rb \subset \T^{N}$. Note that $X$ is an infinite set. It then follows that 
			 \begin{equation}\label{lineToContradict}
			      \bv\cdot\bA(n)\bx_{j}=t/j\searrow0 \;\;\; \text{ for each }n=1,2,...
			 \end{equation}

Suppose for a contradiction that there is a subtorus $S$ of $\Tl$ such that for any $\epsilon>0$, there exists $n$ such that $\bA(n)X$ is dense in a translate of $S$. Suppose $S$ is given by $S=\lb \bb+\ba+\Zl:\ba\in V \rb$ where $V$ is a proper subspace of $\Rl$ and $\bb\in\R^{L}$. Let $\ep>0$ be sufficiently small and suppose there is a subset $Y\subset X \cap S$, an integer $n$ such that $\bA(n)Y$ is $\ep$-dense in $S$. We have two possibilities:
\begin{itemize}
 \item If $\bv\in V^{\perp}$, then $\bv\cdot \bA(n) \by$ is a constant (namely $\bv \cdot \bb$) for any $\by \in Y$, which is not true in view of (\ref{lineToContradict}).
 \item If $\bv \not \in V^{\perp}$, then by Lemma \ref{oneTorusLemma} we have $\bv\cdot\bA(n)Y$ is $L\|\bv\|_{\infty}\ep$-dense in $\T$. Again, in view of (\ref{lineToContradict}), this is impossible if $\ep>0$ is sufficiently small.
\end{itemize}
\end{proof}

In the remainder of the paper we will say the \textit{rank} (\textit{corank}) of $\bA(x)$ is the rank of the $\Z$-module generated by the rows (columns) of
$\bA(x)$. First we describe briefly the ideas of the proof of the implication (\ref{cond}) $\Rightarrow$ (\ref{epdense}). 
Observe that we can't expect $\bA(n) X$ to be $\ep$-dense in the whole of $\Tl$ since there may be some linear dependencies between the rows of $\bA$. 
If $\bA(n) X$ fails to be $\ep$-dense in the ``natural" subtorus defined by these linear dependencies for every $n$, then we use Proposition \ref{mainInequality} to conclude that 
$X$ has \textit{structure}, in the sense that it has an infinite intersection with a translate of a subtorus of $\Tn$. 
This enables us to perform induction on $N$. Let us now introduce some preparatory lemmas.

\begin{lemma}\label{inductLem}
Let $\bA(x)\in \mM_{L\times N}(\Z[x])$ be of rank $\ell$ and satisfy condition (\ref{cond2}) from Theorem \ref{hoangMike}. Then there exist matrices $T\in \mM_{L\times\ell}(\Q)$, $\bB(x)\in \mM_{\ell\times L}(\Z[x])$ such that 
\begin{enumerate}[(i)]
\item $\bA(x)=T\bB(x)$,
\item $\bB_{\ast}(x)$ has full rank, and
\item There is a positive integer $q$ such that $qT$ is integral and  $\|qT\|_{\infty}\ll_{\ell} \|\bA_{\ast}\|_{\infty}^{\ell}$.
\end{enumerate}
\end{lemma}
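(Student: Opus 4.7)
The plan is to take $\bB$ to be the $\ell\times N$ submatrix of $\bA$ consisting of $\ell$ rows whose non-constant parts are $\Q$-linearly independent, and then to use condition (\ref{cond2}) to propagate the $\Q$-linear relations expressing the remaining rows of $\bA_*$ from $\bA_*$ up to the full matrix $\bA$.

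First I would reformulate the rank. By the definition given just before the lemma, the rank of $\bA$ equals the $\Q$-dimension of the span of its rows in $\Q[x]^N$, which coincides with the $\Q$-rank of the block matrix $[A_0|A_1|\cdots|A_D]\in\mM_{L\times N(D+1)}(\Z)$. Condition (\ref{cond2}), applied with $\bw$ ranging over $\Q^N$, says that any $\bv\in\Q^L$ with $\bv^{T} A_d=\bzero$ for $d=1,\ldots,D$ also satisfies $\bv^{T} A_0=\bzero$. Hence the left kernels over $\Q$ of $[A_1|\cdots|A_D]$ and $[A_0|A_1|\cdots|A_D]$ coincide, and these two matrices have the same rank; equivalently, $\bA_*$ has rank $\ell$.

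Now I would pick indices $i_1<\cdots<i_\ell$ so that the rows $(\bA_*)_{i_1},\ldots,(\bA_*)_{i_\ell}$ are $\Q$-linearly independent, and let $\bB\in\mM_{\ell\times N}(\Z[x])$ be the submatrix of $\bA$ formed by the rows $\bA_{i_1},\ldots,\bA_{i_\ell}$. Then $\bB_*$ has full rank $\ell$, proving (ii), and the rows of $\bB_*$ form a basis of the $\ell$-dimensional row space of $\bA_*$; consequently there is a unique $T\in\mM_{L\times\ell}(\Q)$ with $\bA_*=T\bB_*$. To upgrade this to $\bA=T\bB$, for each $i$ I form the test vector $\bv^{(i)}:=\be_i-\sum_{j=1}^\ell T_{ij}\be_{i_j}\in\Q^L$, which by definition of $T$ satisfies $(\bv^{(i)})^{T} A_d=\bzero$ for $d=1,\ldots,D$. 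Condition (\ref{cond2}) then forces $(\bv^{(i)})^{T} A_0=\bzero$ as well, giving $A_0=T B_0$, and hence $\bA=T\bB$, establishing (i).

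For (iii), since the integer matrix $[B_1|\cdots|B_D]\in\mM_{\ell\times ND}(\Z)$ has rank $\ell$, I can find $\ell$ of its columns forming an invertible $\ell\times\ell$ submatrix $M$; set $q:=|\det M|\geq 1$. Applying Cramer's rule to the system $\bA_*=T\bB_*$ restricted to those $\ell$ columns expresses every entry of $qT$ as $\pm$ an $\ell\times\ell$ integer determinant whose entries come from the coefficient matrix of $\bA_*$, so Hadamard's inequality yields $\|qT\|_\infty\leq \ell!\,\|\bA_*\|_\infty^\ell\ll_\ell \|\bA_*\|_\infty^\ell$. The main subtlety is step (i): without (\ref{cond2}) the combinations $T_{i,\cdot}$ read off from the non-constant parts would generally fail to reproduce $A_0$, and (\ref{cond2}) is precisely the hypothesis that ties the constant and non-constant parts of each row of $\bA$ together consistently.
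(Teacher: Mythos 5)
Your proof is correct and takes essentially the same approach as the paper: choose $\ell$ rows of $\bA$ whose non-constant parts are $\Q$-linearly independent to form $\bB$, solve $\bA_{\ast}=T\bB_{\ast}$ for $T$, use condition~(\ref{cond2}) to extend the identity $\bA_{\ast}=T\bB_{\ast}$ to $\bA=T\bB$, and bound $qT$ by isolating an invertible $\ell\times\ell$ minor of the integer coefficient block $[B_1\,\cdots\,B_D]$. A small merit of your write-up is that it explicitly verifies that (\ref{cond2}) forces $\bA_{\ast}$ to have rank $\ell$ as well, which the paper's opening ``without loss of generality'' step quietly presupposes; your test-vector phrasing of the kernel-inclusion argument and your Cramer's-rule bound on $qT$ are cosmetic variants of the paper's $\ker(T^t)\subset\ker(A_0^t)$ and adjugate computations, respectively.
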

\begin{proof}
Without loss of generality we my assume the first $\ell$ rows of $\bA_{\ast}(x)$ are $\Q$-linearly independent. Then there is an $L\times\ell$ matrix $T$ with entries in $\Q$ such that $\bA_{\ast}=T\bB_{\ast}$ where $\bB_{\ast}=\bB_{\ast}(x)\in \mM_{\ell\times N}(\Z[x])$ is the block of the first $\ell$ rows of $\bA_{\ast}(x)$. We claim that condition (b) guarantees that $A_{0}=TB_{0}$ for some $\ell\times N$ integral matrix $B_{0}$. First we show $\ker \left( T^t \right)\subset \ker\left(A_{0}^t\right)$.

Suppose $\bv\in\ker(T^t)$. Then $\bA^{t}_{\ast}\bv=\bB_{\ast}^{t}T^{t}\bv=0$, which implies $\bv\cdot \bA_{\ast}\bw=0$ for any $\bw\in\Qn$. But by condition (b) this implies that $\bv\cdot A_{0}\bw=0$ for each $\bw\in\Qn$, which implies $A_{0}^{t}\bv=0$. That is, $\bv\in\ker(A_{0}^t)$.

Therefore there exists $B_{0}\in M_{\ell\times N}(\Q)$ such that $A_{0}=TB_{0}$. But the uppermost $\ell\times\ell$ block of $T$ is the identity. Thus $B_{0}$ is none other than  the uppermost $\ell\times N$ block of $A_{0}$, and consequently $B_0$ is integral. Upon putting $\bB=\bB_{\ast}+ B_0$, we have $\bB$ is integral and $\bA=T\bB$.

Let $A$ be the $L\times DN$ matrix given by $A=[A_{1}\cdots A_{D}]$ and $B$ be the $\ell \times DN$ matrix given by $B=[B_{1}\cdots B_{D}]$. Since $A_{d}=TB_{d}$ for each $d=1,...,D$, we have $A=TB$. $B$ must have rank $\ell$ since $\bB_{\ast}(x)$ does, so there is an invertible $\ell\times\ell$ minor $B^{\prime}$ of $B$. Let $A^{\prime}$ be the corresponding minor of $A$ and observe we have the equality $A^{\prime}(B^{\prime})^{-1}=T$. Let $q=\det B^{\prime} \neq 0$ and $C=q^{-1}(B^{\prime})^{-1}$ be the adjugate of $B^{\prime}$. We then have the inequality 
\[
\|qT\|_{\infty}=\|A'C \|_{\infty} \ll_{\ell} \|A'\|_{\infty} \|C\|_{\infty} \ll_{\ell} \|\bA_{\ast}\|_{\infty}^{\ell}
\]
as required. Clearly we may assume $q$ to be positive.
\end{proof}

Our crucial tool is the following consequence of Proposition
\ref{mainInequality}. We regard it as some sort of \textit{inverse result} since it tells about the structure of $X$ if dilations of $X$ fail to be $\ep$-dense. 
In this respect our use of Proposition \ref{mainInequality} is rather different from Alon-Peres. It is perhaps no surprise that our proof of Proposition \ref{infinitePtsLemma} involves Ramsey's theorem.

\begin{proposition}\label{infinitePtsLemma}
Suppose $\ep>0$, $X$ is an infinite subset of $\Tn$, and $\bB(x)\in M_{\ell\times N}(\Z[x])$ such that $\bB_{\ast}(x)$ has full rank. If $\bB(r)X$ is not $\ep$-dense in $\T^{\ell}$ for any $r\in\Z$, then there exists a point $\by_{0}\in X$, an integer $J$, and  nonzero $\bw\in\Zn$ such that $\bw\cdot(\by-\by_{0})=J$ for infinitely many $\by\in X$. 
\end{proposition}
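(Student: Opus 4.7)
My plan is to combine the infinite Ramsey theorem with Proposition \ref{mainInequality}. Fix $\ep>0$ and let $M=\lfloor\ell/\ep\rfloor$, so the sum in (\ref{main}) ranges over the finitely many nonzero $\bm\in\Z^{\ell}$ with $\|\bm\|_{\infty}\leq M$. For each such $\bm$ and each pair of distinct points $\bx,\bx'\in X$, define $q(\bm,\bx-\bx')\in\{1,2,\ldots\}\cup\{\infty\}$ to be the least positive integer $q$ with $q\,\bm\cdot B_{d}(\bx-\bx')\in\Z$ for every $d=1,\ldots,D$ (and $\infty$ if no such $q$ exists). Choose a threshold $B=B(\ep,\ell,D)$ of order $\ep^{-2\ell D}$, with the constant to be specified at the end, and color each unordered pair $\{\bx,\bx'\}\subset X$ by the tuple $\bigl(\min(q(\bm,\bx-\bx'),B+1)\bigr)_{\bm}$. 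This is a finite coloring of the $2$-element subsets of $X$, so the infinite Ramsey theorem produces an infinite monochromatic subset $Y\subset X$.

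On $Y$ we are in exactly one of two cases: either (i) some $\bm$ has $q(\bm,\by-\by')=b$ for a single finite value $b\in\{1,\ldots,B\}$ for every pair in $Y$, or (ii) $q(\bm,\by-\by')>B$ uniformly on $Y$ for every admissible $\bm$. In case (i), the vector $\bm':=b\bm\in\Z^{\ell}\setminus\{0\}$ satisfies $\bm'\cdot B_{d}(\by-\by')\in\Z$ for every $d$ and every pair in $Y$. Since $\bB_{\ast}(x)$ has full rank and $\bm'\neq 0$, some $d_{0}\in\{1,\ldots,D\}$ gives $\bw:=B_{d_{0}}^{t}\bm'\in\Z^{N}\setminus\{0\}$; fixing any $\by_{0}\in Y$ we have $\bw\cdot(\by-\by_{0})\in\Z$ for every $\by\in Y$. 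Since $\by-\by_{0}\in(-1,1)^{N}$, these integers lie in the bounded interval $(-\|\bw\|_{1},\|\bw\|_{1})$, so pigeonhole delivers some $J\in\Z$ attained by infinitely many $\by\in Y$, yielding the conclusion.

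The main obstacle is ruling out case (ii), and for this I would apply Proposition \ref{mainInequality} to a finite subsample $Y_{k}\subset Y$ of cardinality $k$; this is legitimate because $\bB(r)Y_{k}\subset\bB(r)X$ fails to be $\ep$-dense for any $r$. Let $\gamma(\bm,\bt):=\lim_{R\to\infty}R^{-1}\sum_{r=1}^{R}e_{\bm}(\bB(r)\bt)$. By Hua's Lemma \ref{hua} applied to $\sum_{d=1}^{D}r^{d}\bm\cdot B_{d}\bt$ after clearing denominators (the constant $\bm\cdot B_{0}\bt$ contributes only a unit phase), $|\gamma(\bm,\bt)|\ll_{D}q(\bm,\bt)^{-1/D}$ when $q$ is finite, and Weyl's equidistribution theorem gives $\gamma(\bm,\bt)=0$ when $q=\infty$. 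In case (ii) these bounds force $|\gamma(\bm,\by_{i}-\by_{j})|\ll_{D}B^{-1/D}$ for every admissible $\bm$ and every off-diagonal pair. Splitting the diagonal (which contributes $k$) from the off-diagonal part in (\ref{main}) then yields
\[
k^{2}\ll_{\ell,D}\ep^{-2\ell}\bigl(k+B^{-1/D}k^{2}\bigr).
\]
Choosing the constant in $B\asymp\ep^{-2\ell D}$ large enough makes $\ep^{-2\ell}B^{-1/D}$ small enough to absorb the second term on the right into the left side, leaving $k\ll_{\ell,D}\ep^{-2\ell}$. This contradicts the infinitude of $Y$, so case (ii) cannot occur and the proof is complete.
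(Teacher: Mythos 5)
Your proof is correct and follows the same high-level blueprint as the paper's: color pairs of points of $X$ by arithmetic data, apply the infinite Ramsey theorem to extract a monochromatic infinite $Y$, read off the conclusion by pigeonhole in the structured case, and rule out the unstructured case by combining Proposition~\ref{mainInequality} with Weyl equidistribution and Hua's bound. The difference is in the coloring and the resulting count. The paper colors an edge $(\bx,\by)$ by the set of $\bw\in\Z^{N}$, $0<\|\bw\|_{\infty}\le M\ell\|\bB_{\ast}\|_{\infty}$, with $\bw\cdot(\bx-\by)\in\Z$ (using $\bomega$ when there is none), so its $\bomega$-monochromatic case can still contain pairs whose polynomial $\Phi_{ij}$ has a small denominator $b$; absorbing those pairs requires the $H_{m}$ estimate of Proposition~\ref{count} and Corollary~\ref{alonPeresCorollary2} to control $\sum_{b}\ms_{b}b^{-1/D}$ and yields $k^{2}\ll k^{2-1/(2D)}$. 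You instead color by the tuple of capped denominators $\min(q(\bm,\cdot),B+1)$, which lets Ramsey push all off-diagonal denominators above the threshold $B$ on $Y$; each off-diagonal term is then $\ll B^{-1/D}$ uniformly, so a trivial count gives $k^{2}\ll \ep^{-2\ell}(k+B^{-1/D}k^{2})$ and, after choosing $B\asymp\ep^{-2\ell D}$, the bound $k\ll\ep^{-2\ell}$, contradicting the infinitude of $Y$ without ever invoking the $H_m$/Corollary~\ref{alonPeresCorollary2} machinery. This is a genuine simplification of this particular proposition; note, however, that your trick of discarding points with small denominators is not available in the quantitative analogue (Proposition~\ref{effectiveLemma}), where the $H_{m}$ counting remains necessary.
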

Note that the last equation is an equality in $\R$ rather than in $\T$, by our identification of points in $\Tn$ with their representatives in $[0,1)^{N}$.
\begin{proof}
We create a complete graph whose vertex set is $X$ and whose edges $(\bx,\by)$ are colored $\bw\in\Zn$ ($0<\|\bw\|_{\infty} \leq M\ell \|\bB_{\ast}\|_{\infty}$) if $\bw\cdot(\bx-\by)\in\Z$ and\footnote{Observe we are allowing multiple colors per edge.} colored $\bomega$ otherwise. By the infinite version of Ramsey's theorem there exists an infinite complete monochromatic subgraph whose vertex set is $Y\subset X$. We now would like to show that this graph cannot be $\bomega$-colored. 

Suppose, by way of contradiction, that the graph is $\bomega-$colored. For any distinct $\bx_{1},...,\bx_{k}$ in $Y$ and $R>0$ we have, by Proposition \ref{mainInequality}:
 				      \begin{eqnarray} \label{main3}
					    k^{2}
					    &\ll_{\ell}&  \dfrac{1}{\ep^{\ell}}\underset{\bm\in\Z^{\ell}}{\dsum_{0<\|\bm\|_{\infty}\leq M}} \dsum_{i=1}^{k}\dsum_{j=1}^{k} \lim_{R \rightarrow \infty} \dfrac{1}{R}\dsum_{r=1}^{R} e_{\bm}\big(\bB(r)(\bx_{i}-\bx_{j})\big) \nonumber \\ \nonumber
					    &=& \dfrac{1}{\ep^{\ell}}\underset{\bm\in\Z^{\ell}}{\dsum_{0<\|\bm\|_{\infty}\leq M}} \dsum_{i=1}^{k}\dsum_{j=1}^{k} \lim_{R \rightarrow \infty} \dfrac{1}{R}\dsum_{r=1}^{R} e_{\bm}\left(\dsum_{d=0}^{D}r^{d}B_{d}(\bx_{i}-\bx_{j})\right) \nonumber \\
					    &=& \dfrac{1}{\ep^{\ell}}\underset{\bm\in\Z^{\ell}}{\dsum_{0<\|\bm\|_{\infty}\leq M}} \dsum_{i=1}^{k}\dsum_{j=1}^{k} \lim_{R \rightarrow \infty} \dfrac{1}{R}\dsum_{r=1}^{R} e\left(\dsum_{d=1}^{D}r^{d}B_{d}^{t}\bm\cdot(\bx_{i}-\bx_{j})\right) \nonumber \\
					    &\ll_{\ell} & \dfrac{M^{\ell}}{\ep^{\ell}} \dsum_{i=1}^{k}\dsum_{j=1}^{k} \lim_{R \rightarrow \infty} \dfrac{1}{R}\dsum_{r=1}^{R} e\left(\dsum_{d=1}^{D}r^{d}B_{d}^{t}\bm \cdot (\bx_{i}-\bx_{j}) \right) 
				      \end{eqnarray}		
where $\bm$ is the lattice point which maximizes the last sum. Let $\tilde{d}$  be the largest index such that $B_{\tilde{d}}^{t}\bm\neq \bzero$. Then $\tilde{d}>0$ because $\bB^{t}_{\ast}(x)$ has $\Q$-linearly independent columns, which implies $B_{d}^{t}\bm$ is not zero for some $d=1,\ldots,D$.  For any $i \neq j$, since $(\bx_i, \bx_j)$ is $\bomega$-colored under our coloring and $\| B_{\tilde{d}}^{t}\bm \| \leq M\ell \|\bB_{\ast} \|_{\infty}$, we have
\begin{equation} \label{distinct}
B_{\tilde{d}}^{t}\bm \cdot(\bx_{i}-\bx_{j}) \neq 0
\end{equation}
Therefore, if $i \neq j$, the polynomial
\[
\Phi_{ij}(r)=\bm \cdot \bB_{\ast}(r) (\bx_{i}- \bx_{j}) = \dsum_{d=1}^{D} r^{d} B_{d}^{t} \bm \cdot (\bx_{i}-\bx_{j})
\]
has degree $\tilde{d}$. By Weyl's equidistribution theorem and Hua's bound (Lemma \ref{hua}), we have:
\begin{equation*}
\lim_{R\ra\infty}\dfrac{1}{R}\dsum_{r=1}^{R}e\left( \Phi_{ij}(r)   \right) = 
\begin{cases}
 0, & \textup{if }\Phi_{ij} \textup{ has at least one irrational coefficient} \\
 \ll_{D} b^{-1/\tilde{d}} \leq b^{-1/D}, & \textup{if } \Phi_{ij}(x)\in\Q[x], 
\end{cases}
\end{equation*}
where in the second case $b=b(i,j)$ is the least positive integer such that $b(\bm\cdot \bB_{\ast}(x)(\bx_{i}-\bx_{j}))\in\Z[x]$. 

For each $b>1$ we define 
		    \begin{eqnarray*}
			  S_{b}=\Big\lbrace  (i,j) 
			  &:&  1\leq i,j\leq k,\;\; b \text{ is the smallest positive integer} \\ && \text{such that}\;\; b(\bm\cdot \bB_{\ast}(x)(\bx_{i}-\bx_{j}))\in\Z[x]\Big\rbrace.
		    \end{eqnarray*}
Let $\ms_{b}=\# S_{b}$ and $\mS_{b}=\ms_{2}+\cdots+\ms_{b}$. Let $x_{i}=B_{\tilde{d}}^{t}\bm\cdot\bx_{i}$ for any $i=1, \ldots, k$, then the $x_i$ are distinct in $\T$ in view of (\ref{distinct}). We notice that if $(i,j)\in S_{b}$
then $b(x_i - x_j) \in \Z$. Consequently, $\mS_{b}\leq H_{b}$  where $H_{b}=h_{1}+\cdots+h_{b}$ and $h_{m}$ is the quantity defined by (\ref{hm}) for the sequence $x_1, \ldots, x_k$. We also have the trivial bound $\mS_{b} \leq k^2$ for any $b$, since for each couple $(i,j)$ we associate at most one $b$. Therefore
			    \begin{eqnarray*}
				  k^{2} \ll_{l,D} \dfrac{M^{\ell}}{\ep^{\ell}}\left( k + \dsum_{b=2}^{\infty} \ms_{b}b^{-1/D} \right)
			    \end{eqnarray*}
Combining this with Corollary \ref{alonPeresCorollary2} we have 
			  \begin{equation}
				k^{2} \ll_{D,\ep,\ell} k^{2-1/(2D)} 
			  \end{equation}
which is a contradiction. 

Therefore there is an infinite complete monochromatic subgraph whose color is $\bw$ for some $\bw\in \Zn$ and $0<\|\bw\|_{\infty}\leq M\ell \|B_{\ast} \|_{\infty}$. More specifically we find that there is an infinite subset $Y\subset X$ such that $\bw\cdot(\by-\by')\in\Z$ for any $\by,\by' \in Y$. Now fix an element $\by_{0}\in Y$. Upon noticing that the map $\by\mapsto \bw\cdot(\by-\by_{0})$ has a finite image (since $\by, \by_0 \in [0,1)^{N}$) and $Y$ is infinite, there exists an integer $J$ such that $\bw\cdot(\by-\by_{0})=J$ for infinitely many $\by\in Y$.
\end{proof}

We are now in a position to finish the proof of Theorem \ref{hoangMike}.

\begin{proof}[Proof of sufficiency of (\ref{cond1}) and (\ref{cond2})]
First we will provide a proof when $N=1$ and then proceed by induction on $N$.\newline

Let $X\subset \T$ be an infinite subset, $0<\ell\leq L$ be the rank of $\bA(x)$, and $\bB(x)$ and $T$ be given by Lemma \ref{inductLem}. 
We claim that for any $\ep>0$ there is an integer $n$ such that $\bB(n)X$ is $\ep$-dense in $\T^{\ell}$. 
Assume, by way of contradiction, that there exists an $\ep_{0}>0$ such that $\bB(n)X$ is not $\ep_{0}$-dense in $\T^{\ell}$ for any $n\in\Z$. 
By Proposition \ref{infinitePtsLemma}  there exists an integer $m \neq 0$, a point $y_{0}\in X$, an integer $J$ such that $m(y-y_{0})=J$ for infinitely many $y\in X$. 
This is clearly impossible (recall that this is an equality in $\R$). Therefore for every $\ep>0$ there exists an integer $n$ such that $\bB(n)X$ is $\ep$-dense in $\T^{\ell}$. Let $\mT=\mathrm{Im}(T)/\Zl$  where $\mathrm{Im}(T) \subset \Rl$ is the image of $T$.  Let $q$ be given by Lemma \ref{inductLem}. 
Then $qT$ is integral and well-defined when considered as a map from $\T^{\ell}$ to $\mT$. Letting $X/q=\lb \bx/q: \bx\in[0,1)^{N} \text{ and }\bx \in X\rb$ we find that $\bA(n)X=(qT)\bB(n)(X/q)$. Therefore for any $\ep>0$ there exists an integer $n$ such that $\bA(n)X$ is $\ep$-dense in $\mT$.

Now we assume the theorem holds for each integer up to $N-1$. Again, by Lemma \ref{inductLem} there exist an $L\times \ell$ matrix $T$ with entries in $\Q$, an $\ell\times N$ matrix $\bB=\bB(x)$ with entries in $\Z[x]$, a positive integer such that 
						\[
								\bA=T\bB
						\]
and the rows of  $\bB_{\ast}$ are $\Q$-linearly independent. Define
					\[
							X/q=\lb \bx/q: \bx\in[0,1)^{N} \text{ and }\bx \in X\rb.
					\]
and $\mT=\textrm{Im}(T)/\Z^{L}$, so that $qT$ is integral and well-defined as a map from $\T^{N-1}$ to $\mT$.  We have two possibilities:
\begin{enumerate}[(i)]
   \item Either for every $\ep>0$ there exists an integer $n$ such that $\bB(n)(X/q)$ is $\ep$-dense in $\T^{\ell}$. This implies that $\bA(n)X = (qT) \bB(n) (X/q) $ is $\tilde{\ep}$-dense in $\mT\subset{\Tl}$, where $\tilde{\ep} \ll \ep \|qT\|_{\infty}$.
   \item Or there exists an $\ep_{0}>0$ such that $\bB(n)(X/q)$ is not $\ep_{0}$-dense in $\T^{\ell}$ for any $n\in\Z$ . 
\end{enumerate}
If we are in the first case, then we are done. We suppose (ii), and rename $X/q$ as $X$. Proposition \ref{infinitePtsLemma} tells us that there is a nonzero $\bw\in\Zn$ and an infinite subset $Y\subset X$ such that $\by\mapsto \bw\cdot\by$ is constant on $Y$. We can assume $\bw\cdot\by=0$ for each $\by\in Y$ since this amounts to translating $X$ by a fixed $\btheta\in\Tn$. Let the subtorus $\mT$ of $\Tn$ be defined by $\mT=\lb \bt\in [0,1)^{N}: \bw\cdot\bt=0\rb$. Then there is an $N\times (N-1)$ matrix $H$ with full rank and integral entries such that 
\begin{equation} \label{h}
\mathrm{Im}(H) /\Z^{N}=\mT
\end{equation}
Since the mapping $\bt\mapsto H\bt+\Zn\in\mT$ is surjective, there is an infinite subset $Z\subset \T^{N-1}$ such that $H Z = Y$. 

Let $\bC(x)=\bA(x)H$, then $\bC$ is an $\ell\times (N-1)$ matrix. Let us verify that $\bC$ satisfies conditions (\ref{cond1}) and (\ref{cond2}). 
Suppose there is $\bq \in \Q^{N-1}$ such that $\bC_{\ast} \bq = \bzero$. Then $\bA_{\ast}(x) H \bq = \bzero$. Since $\bA$ satisfies (\ref{cond1}), it follows that $H \bq = \bzero$. Since $H$ has a trivial kernel, this implies that $\bq = \bzero$ and $\bC$ satisfies condition (\ref{cond1}). 
To see that $\bC$ satisfies condition (\ref{cond2}), let vectors $\bv \in \Q^{\ell}$ and $\bw\in\Q^{N-1}$ be such that $\bv\cdot\bC_{\ast}(x)\bw=0$ identically. Upon setting $\tilde{\bw}=H \bw\in\Q^{N}$, we find that $\bv\cdot \bA_{\ast}(x)\tilde{\bw}=0$ is the zero polynomial. Since $\bA(x)$ satisfies condition 
(\ref{cond2}), it follows that $0=\bv\cdot A_{0}\tilde{\bw}=\bv\cdot A_{0}H\bw=\bv\cdot \bC(0)\bw$. 

Let us now invoke the inductive hypothesis for $\bC$. It follows that there is a subtorus $\mT$ such that for every $\ep>0$ there exists $n$ such that $\bC(n)Z$ is $\ep$-dense in a translate of $\mT$. But $\bA(n)Y=\bC(n)Z$, so we are done.
\end{proof}

\begin{remarks}
It may not be clear from the proof why conditions (\ref{cond1}), (\ref{cond2}) are the correct ones. 
At first sight, it would seem that the only conditions we need in order to make the proof work are the weaker ones:
\begin{itemize}
 \item $T \neq 0$, which is equivalent to $\bA \neq 0$.
 \item $\mathrm{Ker}(T^t) \subset \mathrm{Ker}(A_{0}^t)$, which is equivalent to $\mathrm{Ker}(\bA_{\ast}^t) \subset \mathrm{Ker}(A_{0}^t)$.
\end{itemize}
But we want to maintain these requirements throughout our inductive process. Recall that our matrix $\bA$ is changed after each step, 
so keeping these requirements at each step ultimately leads to conditions (\ref{cond1}) and (\ref{cond2}).
\end{remarks}

\section{The finite version} \label{finite}

In order to make the proof of Theorem \ref{hoangMike} effective, we need to keep track of all the quantities involved when we move from one dimension to the next. The main obstacle in the proof of Theorem \ref{polynomialEffective} is finding an effective version of Proposition \ref{infinitePtsLemma}. One could use the finite version of Ramsey's theorem, 
but currently we don't have a sensible bound for Ramsey numbers which involve more than two colors. We can get past this, by noticing that the graph we used in 
Proposition \ref{infinitePtsLemma} is a very special graph. The following lemma is an effective form of Proposition \ref{infinitePtsLemma}.

\begin{proposition}\label{effectiveLemma}
		Let $\bB(x)\in\mM_{\ell\times N}(\Z[x])$ have full rank and let $X=\lb \bx_{1},...,\bx_{k}\rb\subset \Tn$ be a set of $k$ distinct points. If $\bB(n)X$ is not $\ep$-dense in $\T^{\ell}$ for any $n=1,2,...$ then there exists a subset $Y\subset X$, $\by_{0}\in X$, $\bw\in\Zn$, and $J\in\Z$ such that 
 \begin{eqnarray}
			& &	       \bw\cdot(\by-\by_{0})=J \;\;\textup{ for each }\by\in Y \label{y}, \\
			& &	       \|\bw\|_{\infty}\ll_{\ell,N}  \|\bB_{\ast}\|_{\infty}\ep^{-1} \label{j}, \text{ and} \\
			& &	     \ep^{\ell+1}k^{1/4D} \| \bB_{\ast} \|_{\infty}^{-1} \ll_{\ell,N,D} |Y|. \label{k}
\end{eqnarray}
\end{proposition}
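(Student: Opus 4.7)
The plan is to mimic the proof of Proposition \ref{infinitePtsLemma} quantitatively, replacing the infinite multi-color Ramsey step (which has no useful effective analog) by two direct pigeonhole arguments on fibres of suitable linear forms. First, Proposition \ref{mainInequality} gives $k^2 \ll_\ell \ep^{-\ell}\sum_{0<\|\bm\|_\infty\le M}T(\bm)$ with $T(\bm)=\sum_{i,j}\lim_R R^{-1}\sum_r e(\bm\cdot\bB(r)(\bx_i-\bx_j))$; since $T(\bm)=\lim_R R^{-1}\sum_r|\sum_i e_{\bm}(\bB(r)\bx_i)|^2\ge 0$ and the sum runs over $O_\ell(\ep^{-\ell})$ lattice points, pigeonhole yields $\bm^*\in\Z^\ell\setminus\{\bzero\}$ with $\|\bm^*\|_\infty\le M$ and $T(\bm^*)\gg_\ell\ep^{2\ell}k^2$. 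Expanding $\bm^*\cdot\bB(r)(\bx_i-\bx_j)=\sum_{d=0}^D r^d c_d(i,j)$ with $c_d(i,j)=B_d^t\bm^*\cdot(\bx_i-\bx_j)$, Weyl's criterion combined with Hua's bound (Lemma \ref{hua}) shows that the $(i,j)$-contribution either vanishes (if some $c_d(i,j)$ with $d\ge 1$ is irrational) or is $O_D(b(i,j)^{-1/D})$, where $b(i,j)\ge 1$ is the least common denominator of those $c_d(i,j)$. Since $\bB_*$ has full rank and $\bm^*\ne\bzero$, there is a largest $\tilde d\ge 1$ with $B_{\tilde d}^t\bm^*\ne\bzero$; set $\bw:=B_{\tilde d}^t\bm^*$, which fulfils (\ref{j}).

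Let $T_0=\#\{(i,j):b(i,j)=1\}$ and $c_b=\#\{(i,j):b(i,j)=b\}$ for $b\ge 2$; at least one of $T_0$ and $\sum_{b\ge 2}c_b b^{-1/D}$ is $\gg\ep^{2\ell}k^2$. If $T_0$ dominates, then $T_0$ counts pairs sharing a common fibre of $\Phi:\bx\mapsto(B_d^t\bm^*\cdot\bx)_{d=1}^D\bmod\Z^D$; from $T_0=\sum f_l^2\le(\max f_l)\,k$ we get a fibre $F$ with $|F|\gg\ep^{2\ell}k$. Inside $F$ the integer $\bw\cdot(\by-\by_0)$ has absolute value $\ll N\|\bw\|_\infty\ll\|\bB_*\|_\infty/\ep$, so pigeonholing on its value produces $Y\subset F$ of size $\gg\ep^{2\ell+1}k/\|\bB_*\|_\infty$ with a common integer $J$.

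Otherwise, set $z_i=\bw\cdot\bx_i\in\R$ and let $E$ be the maximum multiplicity of the residues $\bar z_i=z_i\bmod 1$; each class-$b$ pair satisfies $b(z_i-z_j)\in\Z$. Passing to the at most $k$ distinct residues and adapting the argument of Proposition \ref{count} (for a fixed residue at most $m$ other distinct residues are $\equiv$ to it modulo $1/m$, each contributing at most $E$), one gets $h_m^{(\mathrm{ours})}\le kmE$, hence $H_b^{(\mathrm{ours})}\le kEb^2$ and $\mS_b\le\min(k^2,kEb^2)$. A summation by parts as in Corollary \ref{alonPeresCorollary2}, split at $b=\sqrt{k/E}$, yields $\sum_{b\ge 2}c_b b^{-1/D}\ll_D E^{1/(2D)}k^{2-1/(2D)}$, and comparing to the lower bound forces $E\gg_D\ep^{4\ell D}k$. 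Taking $Y$ to be a maximal $\bar z$-fibre and pigeonholing once more on the integer $\bw\cdot(\by-\by_0)\in\Z$ produces $|Y|\gg\ep^{4\ell D+1}k/\|\bB_*\|_\infty$. In either case a routine comparison shows the resulting $|Y|$ dominates $\ep^{\ell+1}k^{1/(4D)}\|\bB_*\|_\infty^{-1}$ in the only regime where this target exceeds $1$ (namely $k\gg\ep^{-4\ell D}$, which lies below the trivial threshold $k\ll\ep^{-4D(\ell+1)}\|\bB_*\|_\infty^{4D}$ at which the target is $\lesssim 1$ and $Y=\{\by_0\}$ already suffices). The main technical obstacle is this second case: without a multi-color effective Ramsey one must contend with possible non-distinctness of the $z_i$'s which breaks Proposition \ref{count}, and the crucial insight is that the very multiplicity $E$ responsible for that loss is precisely what supplies the large common-fibre subset on which $\bw\cdot(\by-\by_0)$ is a constant integer $J$.
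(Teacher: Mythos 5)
Your proposal is correct, and it reaches the stated conclusion, but by a genuinely different route from the paper. The paper's proof introduces a single partition of $X$ into equivalence classes of the relation $\bx\sim\by \iff B_{\tilde d}^{t}\bm\cdot(\bx-\by)\in\Z$; the representatives $y_i=B_{\tilde d}^{t}\bm\cdot\bx_i$ of the $s$ classes are automatically distinct in $\T$, so Corollary~\ref{alonPeresCorollary2} applies verbatim to them. Feeding $\sum_i c_i^2\le kc$ and $s\le k$ into the bound then yields $\ep^{\ell}k^{1/(4D)}\ll c=\max_i|R_i|$ in one stroke, and a single pigeonhole on the integer $\bw\cdot(\by-\by_0)$ inside the biggest class gives (\ref{k}) directly, with no case distinction and no trivial-regime fallback. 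Your argument instead keeps all $k$ points, splits according to whether the ``integer-polynomial'' pairs $T_0$ or the Hua-controlled sum $\sum_{b\ge 2}c_b b^{-1/D}$ dominates, and in the second case replaces the distinctness hypothesis of Proposition~\ref{count} by an explicit multiplicity parameter $E$, deriving $H_b\le kEb^2$ and hence $\sum c_b b^{-1/D}\ll E^{1/(2D)}k^{2-1/(2D)}$. This gives $E\gg\ep^{4\ell D}k$ (and $\max$-fibre $\gg\ep^{2\ell}k$ in the first case), i.e.\ a \emph{linear-in-$k$} lower bound on the fibre size, which is actually stronger than $\ep^\ell k^{1/(4D)}$ in the regime where it matters but weaker when $k$ is small; hence your need for the ``$Y=\{\by_0\}$ when the target is $\lesssim 1$'' fallback. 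The trade-off is therefore: the paper's equivalence-class bookkeeping gives the $k^{1/(4D)}$ bound uniformly, while your two-case split and modified counting give a sharper conclusion for large $k$ at the cost of more constant-chasing and an explicit boundary case.

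One small inaccuracy in your wording (not a gap): the regime where the target $\ep^{\ell+1}k^{1/(4D)}\|\bB_{\ast}\|_\infty^{-1}$ exceeds $1$ is $k\gtrsim\ep^{-4D(\ell+1)}\|\bB_{\ast}\|_\infty^{4D}$, not $k\gtrsim\ep^{-4\ell D}$; the latter is the (weaker) threshold at which your linear bound overtakes the target. The containment $\ep^{-4\ell D}\le\ep^{-4D(\ell+1)}\|\bB_{\ast}\|_\infty^{4D}$ holds, so your conclusion is fine, but the parenthetical mislabels which threshold is which.
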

Note that again, (\ref{y}) is an equality in $\R$.

	   \begin{proof}
 By Proposition \ref{mainInequality} we have a constant $M \ll_{\ell} \ep^{-1}$ such that 
	      \begin{equation}
		     k^{2}\ll_{\ell} \dfrac{1}{\ep^{\ell}} \underset{\bm\in\Z^\ell}{\dsum_{0<\|\bm\|_{\infty}\leq M}} \dsum_{\bx\in X}\dsum_{\by\in X} \lim_{R \rightarrow \infty} \dfrac{1}{R}\dsum_{r=1}^{R} e\left(\dsum_{d=0}^{D}r^{d}B_{d}^{t}\bm\cdot(\bx-\by)\right)
	      \end{equation}
where $e(t)=\exp(2\pi i t)$ and $M\ll_{\ell}\ep^{-2}$. By an abuse of notation, let $\bm\in\Z^{\ell}$ (with $0<\|\bm\|_{\infty}\leq M$) be the lattice point which maximizes the first sum. Then
\begin{equation} \label{mainIneqEffPoly}
 k^2 \ll_{\ell} \dfrac{M^{\ell}}{\ep^{\ell}} \dsum_{\bx\in X}\dsum_{\by\in X} \omega(\bx,\by)
\end{equation}
where $\omega(\bx,\by)$ is the weight given by
		\begin{equation*}
		      \omega(\bx,\by)=\left|\ds\lim_{R\ra\infty}  \dfrac{1}{R}\dsum_{r=1}^{R} e\left(\dsum_{d=1}^{D}r^{d}B_{d}^{t}\bm(\bx-\by)\right) \right|
		\end{equation*}  
Let $d$ be the largest integer such that $B_{d}^{t}\bm\neq0$, then $d>1$ since $\bB_{\ast}$ has full rank. We  partition 
$X$ into equivalence classes $R_{1},...,R_{s}$, with $|R_{i}|=c_{i}$, where $\bx \sim \by$ if $B_{d}^{t}\bm\cdot(\bx-\by)\in\Z$. 
      
Define 
\[
\Phi_{i,j}(r)=\bm\cdot\bB_{\ast}(r)(\bx_{i}-\bx_{j}) = \dsum_{d=1}^{D}r^{d}B_{d}^{t}\bm(\bx-\by)
\]
then $\Phi$ has degree $d$. We use Weyl's equidistribution theorem and Hua's bound to obtain
		\begin{equation} \label{huabound}
		      \omega(\bx_{i},\bx_{j})\leq \begin{cases}
			      1 & \text{ if } \bx\sim \by \\
			      b^{-1/d} & \text{ if }\bx\not\sim \by\;\;\text{ and }\Phi_{ij}(x)\in\Q[x] \\
			      0 &\text{ if } \Phi_{ij} \text{ has at least one irrational coefficient}.
			  \end{cases}
		\end{equation}
where in the second case $b=b(i,j)$ is the smallest positive integer such that $b \Phi_{ij}(x) \in \Z[x]$.

Let $y_{1},...,y_{s}\in\T$ be given by $y_{i}=B_{d}^{t}\bm\cdot \bx_{i}$ for some $\bx_{i}\in R_{i}$. Then by the way we define equivalence classes, $y_{1},...,y_{s}$ are distinct in $\T$. By substituting the bound (\ref{huabound}) into (\ref{mainIneqEffPoly}), we have:

\begin{eqnarray*}
k^{2} &\ll_{\ell} &
		      \left(\dfrac{M}{\ep}\right)^{\ell} \dsum_{i=1}^{s} \dsum_{j=1}^{s} \dsum_{\bx_{i}\in R_{i}}\dsum_{\bx_{j}\in R_{j}} \omega(\bx_{i},\bx_{j}) \\
		      &\leq & \left(\dfrac{M}{\ep}\right)^{\ell}\lb\dsum_{i=1}^{s}c_{i}^{2} +\underset{i\neq j}{\dsum_{i=1}^{s}\dsum_{j=1}^{s}}\dsum_{\bx_{i}\in R_{i}}\dsum_{\bx_{j}\in R_{j}}\omega(\bx_{i},\bx_{j})\rb\\
		       &\leq &\left(\dfrac{M}{\ep}\right)^{\ell}\lb \dsum_{i=1}^{s}c_{i}^{2}+c^{2}\dsum_{b=2}^{\infty}\ms_{b}b^{-1/d}\rb
\end{eqnarray*}
		where 
		\begin{eqnarray*}
		\ms_{b}=\#\{ (i,j) &:& 1\leq i,j\leq s,\;\;  b \text{ is the smallest positive integer} \\ & & \text{ such that }  \; b\Phi_{ij}(x)\in\Z[x] \}
		\end{eqnarray*}
and $c=\max\lb c_{1},...,c_{s}\rb$. Clearly the sequence $\ms_b$ satisfies the conditions of Corollary \ref{alonPeresCorollary2}. Upon writing $c_{1}+\cdots+c_{s}=k$
and noticing $s \leq k$, we have 
		\begin{equation*}
		      k^{2}\ll_{D,\ell} \left(\dfrac{1}{\ep}\right)^{\ell}\lb kc +c^{2}s^{2-1/(2D)} \rb \ll_{D,\ell} \ep^{-2 \ell} c^{2}k^{2-1/(2D)}.
		\end{equation*}
		That is,
\begin{equation*}
					\ep^{\ell}k^{1/4D}\ll_{\ell, D}c. 					
		\end{equation*}
Now let  $Y'$ be equal to one of the equivalence classes $R_{1},...,R_{s}$ whose cardinality is $c$, and $\bw=B_{d}^{t}\bm$. Then $\bw\cdot(\bx-\by)\in\Z$ for each $\bx,\by\in Y^{\prime}$. But seeing that $|\bw\cdot(\bx-\by)|\leq N\|\bw\|_{\infty}$, we are guaranteed the existence of an integer $|J|\leq N\|\bw\|_{\infty}$ and $\by_{0}\in Y^{\prime}$ such that $\bw\cdot(\by-\by_{0})=J$ for at least $c/N\|\bw\|_{\infty}$ elements $\by$ of $Y^{\prime}$. But 
      		\[\|\bw\|_{\infty}\ll_{N,\ell} \|\bB_{\ast}\|_{\infty}M\ll_{N, \ell} \|\bB_{\ast}\|_{\infty}\ep^{-1}\]
      	Combining this with the above we have the existence of a subset $Y\subset Y^{\prime}\subset X$ such that 
      			\begin{equation*}
      					\ep^{\ell+1}k^{1/4D} \|\bB_{\ast} \|_{\infty}^{-1} \ll_{\ell ,N, D} |Y|
      			\end{equation*}
as desired.
	   \end{proof}
We also need to estimate the entries of the matrix $H$ introduced in (\ref{h}). 
\begin{lemma}\label{hProp}
					 Let $\bw\in\Zn$ be nonzero and $\bw^{\perp}=\lb \bv\in\Rn: \bv\cdot\bw=0\rb$. There exists an $(N-1)\times N$ integral matrix $H$ whose image is $\bw^{\perp}$ and $\|H\|_{\infty}= \|\bw\|_{\infty}$. 
\end{lemma}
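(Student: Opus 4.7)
The plan is to give an explicit construction. First I would permute the coordinates of $\R^N$ so that the coordinate of maximum absolute value of $\bw$ is the first one, i.e., $|w_1|=\|\bw\|_\infty$; this is harmless because permuting coordinates preserves $\|\cdot\|_\infty$ and merely relabels $\bw^\perp$. Next, for each $j=2,\ldots,N$ I set
\[
\bv_j \,:=\, w_1\be_j - w_j \be_1 \,\in\, \Z^N.
\]
These vectors manifestly lie in $\bw^\perp$ (since $\bw\cdot \bv_j = w_1 w_j - w_j w_1 = 0$), are integral by construction, and have only two nonzero entries: $w_1$ in coordinate $j$ and $-w_j$ in coordinate $1$.

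I would then take $H$ to be the $N\times(N-1)$ matrix whose columns are $\bv_2,\ldots,\bv_N$, and verify the three required properties. Integrality is immediate. For linear independence (which, combined with $\dim \bw^\perp = N-1$, gives $\mathrm{Im}(H)=\bw^\perp$), the submatrix of $H$ obtained by deleting the first row is $w_1 I_{N-1}$, which is invertible since $\bw\neq \bzero$ implies $w_1\neq 0$. For the norm equality, every nonzero entry of $H$ is one of $\pm w_1,\pm w_2,\ldots,\pm w_N$, so $\|H\|_\infty\leq \max_i|w_i| = |w_1| =\|\bw\|_\infty$, and the bound is attained by the $w_1$ appearing on the ``diagonal'' rows.

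There is no substantive obstacle here; the argument is a direct linear algebra construction. The only point worth highlighting is why the initial permutation step is essential: the naive real basis $\{\be_j - (w_j/w_1)\be_1\}_{j=2}^N$ of $\bw^\perp$ fails to be integral, and the maneuver of clearing denominators by multiplying through by $w_1$ keeps every entry inside $[-\|\bw\|_\infty,\|\bw\|_\infty]$ precisely because we have arranged $|w_1|$ to be the largest $|w_i|$.
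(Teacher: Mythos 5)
Your construction coincides with the paper's (which takes $w_N \neq 0$ as the pivot rather than $w_1$) and is correct. Note, however, that the extra permutation forcing $|w_1|=\|\bw\|_\infty$ is superfluous: every entry of $H$ is $\pm w_i$ for some $i$, and all $N$ coordinates of $\bw$ occur among the entries (the pivot on the diagonal, the rest in the pivot row), so $\|H\|_\infty = \|\bw\|_\infty$ holds for any choice of nonzero pivot coordinate; thus your closing remark about the permutation being essential for the $\ell_\infty$ bound is not quite right --- nonvanishing of the pivot is needed only for linear independence of the columns.
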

\begin{proof}
						Since $\bw=(w_{1},...,w_{N})$ is nonzero we may assume without loss of generality that  $w_{N}\neq0$. Let 
									\[   \bv_{j}=w_{N}\be_{j}-w_{j}\be_{N}. \]
where $(\be_1, \ldots, \be_N)$ is the standard basis of $\R^N$. Then $\bv_{j}\in\bw^{\perp }$ because 
						\[\bv_{j}\cdot\bw=w_{N}\be_{j}\cdot\bw-w_{j}\be_{N}\cdot\bw=0.\]
						Clearly $\bv_{1},...,\bv_{N-1}$ are linearly independent and therefore form a basis for $\bw^{\perp}$. Letting $H$ be the $N\times (N-1)$ matrix whose columns are $\bv_{1},...,\bv_{N-1}$ gives the result.
\end{proof}
We are now in a position to prove Theorem \ref{polynomialEffective}.
\begin{proof}[Proof of Theorem \ref{polynomialEffective}] Let us proceed by induction. \newline

\noindent \textbf{Base case:}
Let $N=1$ and $\bA(x)$ be an $L\times 1$ matrix with entries in $\Z[x]$, having rank $\ell$, degree at most $D$, and satisfy conditions (\ref{cond1}) and (\ref{cond2}) of Theorem \ref{hoangMike}. Let $X=\lb x_{1},...,x_{k}\rb$ be a set of $k$ distinct points in $\T$ such that there does not exist a subtorus $\mT$ such that $\bA(n)X$ is not $\ep$-dense in a translate of $\mT$ for any $n=1,2,\ldots$. 

By Lemma \ref{inductLem}, there exist an $\ell\times N$ matrix $\bB(x)$ whose rows are rows of $\bA(x)$, an $L\times \ell$ matrix $T$ with entries in $\Q$ such that  $\bB_{\ast}(x)$ has full rank and $\bA(x)=T\bB(x)$. Furthermore, there is a positive integer $q$ such that $qT$ is integral and  $\|qT\|_{\infty}\ll_{\ell} \|\bA_{\ast}\|_{\infty}^{\ell}$. Define 
\[
X/q=\lb x/q+\Z:x\in [0,1)\;\text{ and }x \in X \rb
\]
then $X/q$ also has cardinality $k$, and $(qT)\bB(n)(X/q)=\bA(n)X$ is not $\ep$-dense in any translate of $\mT=\mathrm{Im}(T)/\Zl$. This implies that $\bB(n)(X/q)$ is not $\ep_{1}$ dense in $\T^{\ell}$ for any $n=1,2,\ldots$, where $\ep_1 \gg _{L} \ep/\|qT\|_{\infty}$. Therefore by Proposition \ref{effectiveLemma}, there exists a subset $Y\subset X/q$, 
$y_0 \in \T$, integers $J$ and $w$ such that 
				  \begin{equation} \label{j2}
				       w(y-y_0)=J \;\;\text{ for each }y\in Y, 
				    \end{equation} 
				    \begin{equation} \label{y2}
					  \ep_{1}^{\ell+1}k^{1/4D} \|\bB_{\ast} \|_{\infty}^{-1} \ll_{L,D} |Y|,
				    \end{equation}
But (\ref{j2}) cannot happen for more than one value of $y$ (recall that it's an equality in $\R$), Combining this with (\ref{y2}), we have
\begin{equation}
					k\ll_{L,D} \|\bB_{\ast}\|_{\infty}^{4D}\left( \dfrac{1}{\ep_{1}}\right)^{4D(\ell+1)} \leq \|\bB_{\ast}\|_{\infty}^{4D}\left( \dfrac{1}{\ep_{1}}\right)^{4D(L+1)}
\end{equation}
      Recall that $\ep_{1} \gg_{L} \ep/\|qT\| \gg_{L} \ep \| \bA_{\ast} \|_{\infty}^{-\ell} \geq \ep \| \bA_{\ast} \|_{\infty}^{-L}$. We also trivially have $\|\bB_{\ast}\|_{\infty}\leq \|\bA_{\ast}\|_{\infty}$ (since the rows of $\bB$ are the rows of $\bA$ by construction) so 
				\begin{equation}
					k\ll_{L,D} \|\bA_{\ast}\|_{\infty}^{4D(L(L+1)+1)} \left( \dfrac{1}{\ep}\right)^{4D(L+1)}
				\end{equation}
which shows that $k(\ep;L,1,\bA)$ exists and can be bounded by the right hand side.\\
\\
\noindent \textbf{Inductive step.} Now we assume that for each $\bC\in \mM_{L\times n}(\Z[x])$ having degree $D$ and that satisfies conditions (\ref{cond1}) and (\ref{cond2}) of Theorem \ref{hoangMike},  there exist constants $c_{1}(n,L,D)$ and $c_{2}(n,L,D)$ such that 
						\begin{equation}
							k(\ep;L,n,\bC)\ll_{N,L,D} \|\bC_{\ast}\|_{\infty}^{c_{1}(n,L,D)}\left( \dfrac{1}{\ep} \right)^{c_{2}(n,L,D)}.
						\end{equation}
for $n=1,2,...,N-1$. 

Let $\bA(x) \in \mM_{L\times N}(\Z[x])$ have degree at most $D$ and satisfy conditions (\ref{cond1}) and (\ref{cond2}) from Theorem \ref{hoangMike}. Suppose that $X=\lb \bx_{1},...,\bx_{k}\rb$ is a set of $k$ distinct points in $\Tn$ such that there does not exist a subtorus $\mT$ of $\T^{L}$ such that $\bA(n)X$ is $\ep$-dense in a translate of $\mT$ for any $n=1,2,...$. Suppose $\bA(x)$ has rank $\ell$. Again, let $\bB(x)\in\mM_{\ell\times N}(\Z[x]), \;T\in\mM_{L\times \ell}(\Q)$ and $q \in \Z$
be given by Proposition \ref{inductLem}, and let $X/q=\lb\bx/q:\bx\in[0,1)^{N}\text{ and }x\in X\rb$. As before we see that $\bB(n)(X/q)$ cannot be $\ep_{1} \gg \ep/\|qT\|-$dense in $\Tl$ for any $n=1,2, \ldots$. Therefore by Lemma \ref{effectiveLemma} then there exists a subset $Y\subset X/q$, $\by_{0}\in\Tn$, $J\in\Z$ and a $\bw\in\Zn$ such that 
				    \begin{equation}
				       \bw\cdot(\by-\by_{0})=J \;\;\text{ for each }\by\in Y, 
				    \end{equation} 
				    \begin{equation}\label{y3}
					  \ep_{1}^{\ell+1}k^{1/(4D)} \|\bB_{\ast} \|^{-1}_{\infty} \ll_{N,L,D} |Y|, \text{ and }
				    \end{equation}
				    \begin{equation}
					    0< \|\bw\|_{\infty}\ll_{L,N} \|\bB_{\ast}\|_{\infty}\ep_{1}^{-1}.
				    \end{equation}
Clearly $Y$ lies in a translate of the torus $\mT=\lb \bx+\Zn: \bx \in [0,1)^{N}, \bx\cdot\bw=0 \rb\subset \Tn$. By Lemma \ref{hProp}, there is a matrix $H\in\mM_{N\times (N-1)}(\Z)$ of rank $N-1$ such that the range of $H$ is $\bw^{\perp}$ and $\| H\|_{\infty}=\|\bw\|_{\infty}$.  $H$ is surjective as a map from $\T^{N-1}$ to $\mT$ so there is a set $Z$ of cardinality $|Z|=|Y|$ points in $\T^{N-1}$ such that $HZ=Y$. By the definition of the function $k(\ep;L,N,\bA)$, we have that
		\begin{equation}
				|Y|=|Z| \leq k(\ep_1;L,N-1,\bA H).
		\end{equation}

Note that the degree of $\bA H$ is at most $D$, so by the inductive hypothesis and (\ref{y3}) we have
\begin{equation}\label{eq:4}
					  \ep_{1}^{L+1}k^{1/(4D)} \| \bB_{\ast} \|_{\infty}^{-1} \ll_{N,L,D} \|(\bA H)_{\ast}\|_{\infty}^{c_{1}(N-1,L,D)}\left( \dfrac{1}{\ep_1} \right)^{c_{2}(N-1,L,D)}
\end{equation}
But \[\|(\bA H)_{\ast}\|_{\infty}\ll_{N,L}\|\bA_{\ast}\|_{\infty}\|H\|_{\infty}=\|\bA_{\ast}\|_{\infty}\|\bw\|_{\infty}\ll \|\bA_{\ast}\|_{\infty}\ep_{1}^{-1}\]
and $\|\bB_{\ast} \|_{\infty} \leq  \|\bA_{\ast} \|_{\infty}$. Therefore,
\[
k^{1/(4D)} \ll_{N,L,D} \|\bA_{\ast}\|_{\infty}^{1+c_{1}(N-1,L,D)}\left( \dfrac{1}{\ep_1} \right)^{c_{1}(N-1,L,D)+c_{2}(N-1,L,D)+L+1} \\ 
\]
Recalling that $\ep_1 \gg_{N,L} \ep \|qT\|_{\infty}^{-1} \gg \ep \| \bA_{\ast}\|_{\infty}^{-L}$, we have
\begin{equation}
					  k \ll_{N,L,D} \| \bA_{\ast}\|_{\infty}^{c_{1}(N,L,D)}\left( \dfrac{1}{\ep} \right)^{c_{2}(N,L,D)}
\end{equation}
where
\[
c_{2}(N,L,D)= 4D \Big( c_{1}(N-1,L,D)+c_{2}(N-1,L,D)+L+1 \Big)   
\]
and
\[
c_1(N,L,D) = Lc_2 (N,L,D) + 4D \Big( 1+c_1(N-1,L,D) \Big)
\]
This shows that $k(\ep;L,N,\bA)$ exists, and establishes a bound of the desired form for $k(\ep;L,n,\bA)$.
\end{proof}
\begin{remark}
As we noted in the introduction, we do not attempt to find the optimal values of the exponents $c_{1}$ and $c_{2}$ and the values that we achieve can be improved. We found in the base step that $c_{1}(1,L,D)=4D(L(L+1)+1)$ and $c_{2}(1,L,D)=4D(L+1)$. It is not difficult to show that $c_{1}(N,L,D)\leq (CD)^{N}L^{N+1}$ and $c_{2}(N,L,D)\leq (CDL)^{N}$  for  $N,D,L\geq 1$, and $C$ is a positive constant with $C\leq 20$.   It would be interesting to know the true order of magnitude for the optimal exponents, even for fixed values of $N,L$, and $D$. When $N\geq L$ and $X=X_{m}^N$ where $X_{m}$ is the Farey sequence of order $m=2/\ep$, no dilation $n\mathbb{P} X$, where $\mathbb{P}$ is projection onto the first $L$ components, contains a point in the cube $(0,\ep)^{L}$. But $\# X=\Omega(\ep^{-2N})$ which implies that the optimal choice for $c_{2}(N,L,1)$  is at least $2N$ when $N\geq L$. This is how the lower bound for $k$ is obtained in \cite{MR1200973} when $N=L=1$ and it is \textit{nearly} sharp in this case. 
\end{remark}

\section{The High Dimensional Glasner Theorem}\label{glasnerSection}

In this section we prove a stronger result than Theorem \ref{HDGlasner}. The proof of Theorem \ref{HDGlasner} follows along the same lines of the proof of \cite[Proposition 6.1]{MR1143662}. 
Without any extra effort effort, we can add the extra requirement that the entries of $T$ be relatively prime. This is reminiscent of Theorem \ref{alonPeres} (\ref{ap1}) though perhaps any resemblance stops here. We have the following:

\begin{theorem}\label{primitiveGlasner}
				For any $\ep>0$ and any subset $X\subset \Tn$ of cardinality at least
								$k\gg_{L}\ep^{-3LN}$
				 there exists a matrix $T\in\mM_{L\times N}(\Z)$ with relatively prime entries such that $TX$ is $\ep$-dense in $\Tl$.
\end{theorem}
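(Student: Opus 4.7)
The proof follows the probabilistic template of Alon--Peres' \cite[Proposition 6.1]{MR1143662}, adapted to the higher-dimensional coprime setting by a contradiction/averaging argument over a suitable family of matrices. Fix a parameter $P = \lceil C\ep^{-c} \rceil$ with constants $c, C > 0$ to be chosen, and let $\scrT \subset \mM_{L\times N}(\Z)$ be the set of matrices with entries in $\{1,\ldots,P\}$ whose $LN$ entries are jointly coprime. For $LN \geq 2$ a standard M\"obius inversion gives $|\scrT| = P^{LN}/\zeta(LN) + O(P^{LN-1}) \asymp P^{LN}$. Suppose for contradiction that $TX$ is not $\ep$-dense in $\Tl$ for any $T \in \scrT$. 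Applying Lemma \ref{multi-montgomery} together with the Cauchy--Schwarz step from the proof of Proposition \ref{mainInequality} to each single $T$ and then summing over $T \in \scrT$ gives
\[
|\scrT|\,k^{2} \ll_{L} M^{2L}k\,|\scrT| + M^{L}\sum_{0<\|\bm\|_{\infty}\leq M}\sum_{i\ne j}\Bigl|\sum_{T\in\scrT}e_{\bm}\bigl(T(\bx_{j}-\bx_{i})\bigr)\Bigr|,
\]
where $M = \lceil L/\ep \rceil$ and the first term on the right is the diagonal ($i = j$) contribution; the heart of the argument is the off-diagonal term.

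For the off-diagonal sum, set $\mathbf{z} = \bx_j - \bx_i$ and apply M\"obius inversion on the coprime condition combined with the elementary bound $|\sum_{t=1}^{Q} e(ty)| \leq \min(Q, 1/\|y\|)$ to obtain
\[
\Bigl|\sum_{T\in\scrT}e_{\bm}(T\mathbf{z})\Bigr| \ll \prod_{l=1}^{L}\prod_{n=1}^{N}\min\!\Bigl(P,\,\frac{1}{\|m_l z_n\|}\Bigr)
\]
(up to absorbable contributions from the $d>1$ terms in the M\"obius sum). Summing over $\bm \neq \bzero$ factors through the product structure in $l$, yielding the per-pair quantity $F(\mathbf{z})^L - P^{LN}$, where
\[
F(\mathbf{z}) := \sum_{|m|\leq M}\prod_{n=1}^{N}\min\!\Bigl(P,\,\frac{1}{\|m z_n\|}\Bigr).
\]

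The central task is to bound $F(\mathbf{z})$ uniformly over the pairs. The estimate splits into two regimes: \emph{generic} pairs, where every coordinate $z_n$ of $\mathbf{z}$ is either irrational or rational of denominator exceeding $M$, and \emph{exceptional} pairs, where at least one $z_n$ is rational of denominator $\leq M$. For generic $\mathbf{z}$, Weyl equidistribution of the orbit $\{m\mathbf{z}\}_m$ on $\Tn$ combined with classical Vinogradov-type estimates on $\sum_m \min(P, 1/\|m z\|)$ bounds $F(\mathbf{z})$ sharply enough that the generic contribution is absorbed by the LHS. For the exceptional case, a coset-counting argument shows that for each coordinate $n$, the number of pairs $(i, j)$ with $x_{i,n} - x_{j,n}$ rational of denominator $\leq M$ is at most $|\mathcal{R}_M|\cdot k \ll M^2 k$, where $\mathcal{R}_M \subset \T$ is the set of rationals with denominator $\leq M$; summing over $n$ gives at most $O(NkM^2)$ exceptional pairs, for each of which we use only the trivial bound $F(\mathbf{z}) \leq MP^N$.

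Choosing $P$ as an appropriate power of $1/\ep$ and balancing the two regime contributions against the LHS $|\scrT|k^2 \asymp P^{LN}k^2$ yields the target bound $k \ll_{L,N} \ep^{-3LN}$. The main obstacle will be the delicate bookkeeping between the two regimes --- in particular the precise estimate on $F(\mathbf{z})$ in the intermediate-denominator regime --- but since the target exponent $3LN$ is not sharp (the one-dimensional case alone is known to permit the Alon--Peres exponent $2 + o(1)$), we can afford somewhat loose bounds throughout. The coprimality constraint is handled at negligible cost via M\"obius inversion, as the paper indicates.
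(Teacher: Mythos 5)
Your proposal takes a genuinely different route from the paper. The paper does \emph{not} average over a large family of coprime matrices and Möbius-invert; instead it (a) projects $X$ onto a coordinate axis $i$ for which the projected set $Y_{i}\subset\T$ has at least $k^{1/N}$ distinct elements, then (b) restricts attention to the one-parameter family of rank-one maps $T(n)=\ba(n)\cdot\mathbb{P}_{i}$ with $\ba(n)=(q_{1}n,\;q_{2}n+1,\;q_{3}n,\ldots,q_{L}n)$ and $q_{\ell}=(M+1)^{\ell-1}$ (primitivity comes free from $\gcd(n,q_{2}n+1)=1$), and (c) applies Proposition~\ref{mainInequality} in the linear-polynomial case $n\mapsto\ba(n)$. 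The base-$(M+1)$ choice of the $q_{\ell}$ forces $Q=\sum_{\ell}m_{\ell}q_{\ell}\ne0$ for every admissible $\bm$, at which point the only surviving term in the averaged exponential sum is an indicator of $Q(x-y)\in\Z$, and Proposition~\ref{count} finishes with three lines of bookkeeping. This is shorter and sidesteps both the Möbius sum and the two-regime analysis of $F(\mathbf z)$ entirely.

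There is, moreover, a concrete gap in your proposal that the paper's projection step is specifically designed to avoid. In your exceptional-regime count you assert that for a fixed coordinate $n$, the number of pairs $(i,j)$ with $x_{i,n}-x_{j,n}$ rational of denominator $\le M$ is $\ll M^{2}k$. This is false without a distinctness hypothesis on the projected coordinates: if $X$ lies in a coordinate hyperplane $\{x_{n}=0\}$ (while remaining a genuine set of $k$ distinct points in $\Tn$), then $x_{i,n}-x_{j,n}=0$ for \emph{all} $k^{2}$ pairs, and your bound would need to be $k^{2}$, not $M^{2}k$. The estimate $H_{M}\le kM^{2}$ of Proposition~\ref{count} is only valid for $k$ \emph{distinct} points in $\T$, and projection onto a single coordinate of $\Tn$ can collapse your $k$ points to far fewer. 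Precisely to guarantee this distinctness, the paper first chooses the coordinate $i$ maximizing $\#X_{i}$, which by a pigeonhole gives $\#X_{i}\ge k^{1/N}$ distinct one-dimensional points; only then does it invoke the pair-counting estimate (and this is also where the extra factor of $N$ in the exponent $3LN$ originates). To salvage your approach, you would need either to incorporate such a projection step explicitly before the Möbius averaging, or else to carry out the exceptional-pair count in a way that degrades gracefully when many projected coordinates coincide. As written, the exponent $3LN$ does not follow.
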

We note that the exponents we obtain can be easily improved, but we opt for cruder bounds for the sake of brevity.
 \begin{proof} 
Let $\ep>0$ and Let $X\subset \Tn$ have cardinality $k$ and let $X_{j}\subset \T$ be the projection of $X$ onto the $j^{th}$ coordinate axis for $j=1,2,...,N$. The projection homomorphism $\mathbb{P}_{j}$ is represented by inner product with the vector $(0,...,1,...,0)$ where the 1 is in the $j^{th}$ entry. Clearly
	  \begin{equation}
		  k=\# X\leq \dprod_{j=1}^{N}\#X_{j}.
	  \end{equation}
 Consequently there is a projection $X_{i}$ for which $\#X_{i}\geq k^{1/N}$. Let $Y$ be a subset of $X$ such that its projection on the $i^{th}$ coordinate 
 $Y_i \subset \T$ has cardinality at least $K=\lceil k^{1/N} \rceil$.  Now if we can find a primitive vector $\ba\in\Zl$ such that $\ba Y_i$ is $\ep$-dense in $\Tl$ we are done once setting $T$ equal to the composition of $\mathbb{P}_{i}$ and the homomorphism induced by multiplication by $\ba$. We will show that we can choose $\ba$ to be of the following form
	      \[
		    \ba=\ba(n)=(q_1n,q_{2}n+1,q_{3}n,...,q_{L}n)
	      \]
where we choose $q_{\ell}=(M+1)^{\ell-1}$ for $n\geq 1$ where $M=[L/\ep]$. Note that $\ba$ is primitive since $(n,q_{2}n+1)=1$. 

Suppose, by way of contradiction, that there is no $n$ for which $\ba Y=\ba(n)Y$ is $\ep$-dense in $\Tl$. Then we have by Proposition \ref{mainInequality} 
\begin{equation}\label{main5}
    					K^{2}\ll_{L} \dfrac{1}{\ep^{L}}\underset{\bm\in\Zl}{\dsum_{0<\|\bm\|_{\infty}\leq M}} \dsum_{x\in Y_{i}}\dsum_{y\in Y_{i}} \lim_{R \rightarrow \infty} \dfrac{1}{R}\dsum_{r=1}^{R}e\Big(\bm\cdot \ba(r)(x-y)\Big).
\end{equation}
By abuse of notation, let $\bm$ be the lattice point which maximizes the first sum. Then
\[
 K^{2}\ll_{L} \dfrac{M^L}{\ep^{L}} \dsum_{x\in Y_{i}}\dsum_{y\in Y_{i}} \lim_{R \rightarrow \infty} \dfrac{1}{R}\dsum_{r=1}^{R}e\Big(\bm\cdot \ba(r)(x-y)\Big).
\]

But 
    			\begin{eqnarray*}
    					\ds\lim_{R\ra\infty}\dfrac{1}{R}\dsum_{r=1}^{R}e\Big(\bm\cdot \ba(r)(x-y)\Big)
    					&=& \ds\lim_{R\ra\infty}\dfrac{1}{R}\dsum_{r=1}^{R}e\left(r(x-y)\dsum_{\ell=1}^{L} m_{\ell}q_{\ell} \right) \\
    					&=& \begin{cases}
    									1 & \text{ if } (x-y)\dsum_{\ell=1}^{L} m_{\ell}q_{\ell}\in\Z \\
    									0 & \text{ otherwise}.
    							\end{cases}
    			\end{eqnarray*}
Hence, 
\[
K^{2 }\ll_{L}\ep^{-2L} \#\{ (x,y): x, y \in Y_{i}, \; Q(x-y) \in \Z \}
\]
where $Q=\dsum_{\ell=1}^{L} m_{\ell}q_{\ell}$. Our choices of $q_{1},...,q_{L}$ guarantee that $Q$ is non-zero. The right hand side of the above inequality can be trivially be bounded (by the same reasoning as in Proposition \ref{count}) by 
\[\ep^{-2L} KQ \ll \ep^{-2L}K M^{L} \ll \ep^{-3L} K\] 
Recalling $K=\lceil k^{1/N} \rceil$ gives
    			\[
						    	k\ll_{L}\ep^{-3LN}.
    			\]
\end{proof}

\section{Concluding Remarks} \label{applications}

We conclude with a few remarks concerning our main results. For example, it is obvious by Theorem \ref{HDGlasner} that if $X\subset \Tn$ is an infinite subset then the union $\cup_{T}TX$  over all $T\in\mM_{L\times N}(\Z)$ is dense in $\Tl$. Moreover, if $X$ is invariant under the action of $\mM_{L\times N}(\Z)$, then $X$ is dense in $\Tl$.  Similarly, a simple compactness argument implies the following corollary Theorem \ref{hoangMike}.
\begin{corollary} \label{largeInvariant}
	Let $\bA(x)\in \mM_{L\times N}(\Z[x])$ satisfy conditions (\ref{cond1}) and (\ref{cond2}) of Theorem  \ref{hoangMike}. If $X\subset \Tn$ is an infinite subset, then the closure of $\cup_{n}\bA(n)X$ contains a translate of a subtorus $\mT$.  
\end{corollary}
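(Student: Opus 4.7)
The plan is to extract the desired translate as a limit of the $\varepsilon$-dense translates provided by Theorem \ref{hoangMike}, using the fact that the set of translates of a fixed subtorus $\mT$ in $\Tl$ is itself compact (being the quotient $\Tl/\mT$).

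First I would invoke Theorem \ref{hoangMike} to obtain a subtorus $\mT = \mT(X,\bA) \subset \Tl$ such that for each positive integer $k$ there exist an integer $n_{k}$ and an element $\bb_{k} \in \Tl$ with $\bA(n_{k})X$ being $(1/k)$-dense in the translate $\bb_{k} + \mT$. Since the quotient group $\Tl / \mT$ is compact, the sequence of cosets $\bb_{k} + \mT$ has a convergent subsequence; after relabeling, assume $\bb_{k} + \mT \to \bb_{\infty} + \mT$ in $\Tl/\mT$.

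Next I would verify that $\bb_{\infty} + \mT$ lies inside the closure of $\bigcup_{n} \bA(n)X$. Fix an arbitrary point $\by \in \bb_{\infty} + \mT$ and a tolerance $\delta > 0$. Choose $k$ so large that $1/k < \delta/2$ and so that the coset $\bb_{k} + \mT$ is within $\delta/2$ of $\bb_{\infty} + \mT$ in $\Tl/\mT$; the latter condition guarantees a point $\by' \in \bb_{k} + \mT$ with $\|\by - \by'\|_\infty < \delta/2$ in $\Tl$ (by translating within the coset, using that $\mT$ is a connected subgroup so the two cosets are close \emph{pointwise} after an appropriate translation along $\mT$). Now by the $(1/k)$-density of $\bA(n_{k})X$ in $\bb_{k} + \mT$, there exists $\bx \in X$ with $\|\bA(n_k)\bx - \by'\|_\infty < 1/k < \delta/2$, whence $\|\bA(n_k)\bx - \by\|_\infty < \delta$. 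Since $\delta$ was arbitrary, $\by$ lies in the closure of $\bigcup_n \bA(n)X$, and since $\by$ was arbitrary in $\bb_\infty + \mT$, this translate of $\mT$ is contained in that closure.

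The only mildly subtle point is the passage from closeness in $\Tl/\mT$ to pointwise closeness of the cosets: this is where I use that $\mT$ is a closed connected subgroup, so the quotient map $\Tl \to \Tl/\mT$ is an open map of compact abelian groups with a continuous local section (or, more concretely, that one can realize the quotient metric on $\Tl/\mT$ as the Hausdorff distance between cosets). Everything else is a routine two-$\varepsilon$ chase, which is why the result is phrased as a corollary rather than a separate theorem.
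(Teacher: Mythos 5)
Your proposal is correct, and it is exactly the ``simple compactness argument'' the paper alludes to without elaborating: take the subtorus $\mT$ and the $(1/k)$-dense translates supplied by Theorem \ref{hoangMike}, pass to a convergent subsequence of cosets in the compact quotient $\Tl/\mT$, and run a two-$\varepsilon$ chase to show the limit translate lies in the closure of $\bigcup_n\bA(n)X$. The only superfluous remark is the appeal to connectedness of $\mT$: since $\mT$ is a closed subgroup, the quotient metric is by definition $d(a+\mT,b+\mT)=\inf_{t\in\mT}\|a-b-t\|$, so closeness of cosets in $\Tl/\mT$ already gives closeness of suitable representatives, and equality of the quotient metric with the Hausdorff distance between cosets is automatic for cosets of a fixed closed subgroup, connected or not.
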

In particular, if  $X$ is infinite and $X\subset\bA(n)X$ for each $n$, then the closure of $X$ contains a translate of a subtorus $\mT$. \newline
\indent  It would be interesting to see what kind of generalizations can be made of Theorem \ref{HDGlasner}. That is, what conditions on an infinite topological group $G_{1}$ and a metric group $G_{2}$ guarantee that for any infinite subset $X\subset G_{1}$, and $\ep>0$, there exists a continuous homomorphism $\varphi:G_{1}\ra G_{2}$ such that $\varphi(X)$ is $\ep$-dense in $G_{2}$? An interesting special case of this question occurs when $G_{1}$ is a compact (or locally compact) Abelian group and $G_{2}=U(1)=\lb z\in\C: |z|=1\rb$, the problem is to find a unitary character $\varphi$ of $G_{1}$ which distributes a prescribed set of points evenly throughout $U(1)$. \newline
\indent One necessary condition on $G_{1}$ is that for each $\ep>0$ there must exist a characters $\varphi$ for which $\varphi(G_{1})$ is $\ep$-dense in $U(1)$. Even though this condition is inherently necessary, it cannot be dismissed as a triviality. For instance,  if $G_1= \mathbf{F}_2^{\infty}$ with the metric $d(x,y)=\sum_{i=1}^{\infty}\frac{|x_i-y_i|}{2^i}$, then the group of all (continuous) characters of $G_1$ is $\mathbf{F}_2^{\omega} = \{x=(x_1,x_2 \ldots): x_i \neq 0 \textup{ for finitely many } i\}$ via
$x(y)= (-1)^{x \cdot y}$ for all $x \in \mathbf{F}_2^{\omega}, y \in \mathbf{F}_2^{\infty}$ (note that the dot product is well defined). 
But the image of the whole of $G_1$ under any $x$ is the set $\{-1,1\}$ and can't be $\ep$-dense.  \newline
\indent As noted in the introduction, Alon and Peres are able to estimate the discrepancy of dilations of the form $nX$ using the probabilistic method (see Theorem 1.2 from \cite{MR1143662}). It would be interesting to see an analogous result in higher dimensions.\newline
\indent  Baker \cite{MR2803785} has proven a quantitative lemma about dilations of the form $nX$ where $X\subset \Tn$, though his hypotheses and conclusion differ from our results. His proof makes use of Lemma \ref{multi-montgomery} as well.

\bibliographystyle{plain}
\bibliography{uniform}

\end{document}